\newcommand{\bigperp}[2]{%
  \vcenter{
    \math\hbox{\scalebox{\ifx#1\displaystyle2.1\else1.5\fi}{$#1\perp$}}
  }%
}
\newcommand{\argmin}{\operatornamewithlimits{argmin}}
\newtheorem{thm}{Theorem}[section]
\newtheorem{corollary}[thm]{Corollary}
\newtheorem{lemma}[thm]{Lemma}
\newtheorem{remark}[thm]{Remark}
\newtheorem{Lemma}[thm]{Lemma}
\theoremstyle{definition}
\newcommand\numberthis{\addtocounter{equation}{1}\tag{\theequation}}
\newcommand{\vp}{{\varphi}}
\newcommand{\ep}{{\epsilon}}
\newcommand{\calA}{{\mathcal{A}}}
\newcommand{\calB}{{\mathcal{B}}}
\newcommand{\calC}{{\mathcal{C}}}
\newcommand{\calG}{{\mathcal{G}}}
\newcommand{\calH}{{\mathcal{H}}}
\newcommand{\calL}{{\mathcal{L}}}
\newcommand{\calK}{{\mathcal{K}}}
\newcommand{\calM}{{\mathcal{M}}}
\newcommand{\calN}{{\mathcal{N}}}
\newcommand{\calR}{{\mathcal{R}}}
\newcommand{\calV}{{\mathcal{V}}}
\newcommand{\diag}{{\rm diag}}
\newcommand{\real}{\mathbb{R}}
\newcommand{\E}{\mathbb{E}}
\newcommand{\B}{\mathbb{B}}
\newcommand{\PP}{\mathbb{P}}
\newcommand{\grad}{{\nabla}}
\newcommand{\Q}{Q}
\newcommand{\Qinv}{\Theta}
\newcommand{\minimize}{\mathop{\mathrm{minimize}}}
\def\param{\varsigma}
\definecolor{darkgreen}{rgb}{0,0.5,0}
\definecolor{purple}{rgb}{1,0,1}
\newcommand{\kibitz}[2]{\ifnum\Comments=1\textcolor{#1}{#2}\fi}
\begin{document}


\begin{center}
\LARGE Kinematic Formula for Heterogeneous Gaussian Related Fields
\end{center}

%



\newcommand\blfootnote[1]{%
  \begingroup
  \renewcommand\thefootnote{}\footnote{#1}%
  \addtocounter{footnote}{-1}%
  \endgroup
}

\medskip

\begin{center}
{\large Snigdha Panigrahi \ \ \ Jonathan Taylor \ \ \ Sreekar Vadlamani}
\end{center}



\newcommand{\Addresses}{{
  \bigskip
  \footnotesize
  
Snigdha Panigrahi, \textsc{Department of Statistics, Stanford University.
    }\par\nopagebreak
  \textit{E-mail address}, Snigdha Panigrahi: \texttt{snigdha@stanford.edu}

  \medskip

  Jonathan Taylor, \textsc{Department of Statistics, Stanford University.
  }\par\nopagebreak
  \textit{E-mail address},Jonathan Taylor : \texttt{jonathan.taylor@stanford.edu}

  \medskip

  Sreekar Vadlamani, \textsc{TIFR- Center for Applicable Mathematics, Bangalore
  }\par\nopagebreak
 \textit{E-mail address}, Sreekar Vadlamani: \texttt{sreekar@math.tifrbang.res.in}

}}


\begin{abstract}
We provide a generalization of the Gaussian Kinematic Formula (GKF) in \cite{Taylor06} for multivariate, heterogeneous Gaussian-related fields. The fields under consideration, $f=F\circ y$, are non-Gaussian fields built out of smooth, independent Gaussian fields $y={(y_1,y_2,..,y_K)}$ with heterogeneity in distribution amongst the individual building blocks. Our motivation comes from potential applications in the analysis of Cosmological Data (CMB). 
Specifically, future CMB experiments will be focusing on polarization data, typically modeled as isotropic vector-valued Gaussian related fields with independent, but non-identically distributed Gaussian building blocks; this necessitates such a generalization. Extending results \cite{Taylor06} to these more general Gaussian relatives with distributional heterogeneity, we present a generalized Gaussian Kinematic Formula (GKF). The GKF in this paper decouples the expected Euler characteristic of excursion sets
into Lipschitz Killing Curvatures (LKCs) of the underlying manifold and certain Gaussian Minkowski Functionals (GMFs). These GMFs arise from Gaussian volume expansions of ellipsoidal tubes as opposed to the usual tubes
in the Euclidean volume of \textit{tube formulae}. The GMFs form a main contribution of this work that identifies this tubular structure and a corresponding volume of tubes expansion in which the GMFs appear. 
\end{abstract}

\bigskip
{\bf Keywords:}  Random Fields, Heterogeneous Fields, Gaussian processes, Euler Characteristic, Excursions, Kinematic Formula, Tube Formula.








\section{Smooth random fields and integral geometry}
\label{intro}

Since the work of \cite{Adler81} and \cite{worsley1994local}, the study of smooth
(usually Gaussian) random fields has exposed a very nice connection
between properties of the excursion sets of the random fields
and integral geometric properties of the parameter space of the field
\cite{worsley1994local}. In more recent work, \cite{Taylor06} the 
integral geometric
story has been extended to also include integral geometric
properties of the marginal distribution (assumed constant) of the
random field. This connection has been dubbed a Gaussian Kinematic Formula.
In this work, we extend the GKF to a larger class of random fields, relaxing
the assumption of identical distribution. Before stating
our main result, we recall some classical quantities in integral geometry
as well as earlier work in smooth random fields.

\subsection{Kinematic Formulae}

Perhaps the canonical example of
integral geometric formulae are the {\it kinematic fundamental formulae} (KFF), having been applied in areas
such as biology, mineralogy and metallurgy (see \cite{Santalo} and references therein). Kinematic fundamental formulae are equalities establishing relationships between some averaged 
global geometric features of all possible intersections of two given {\it bodies}, and the global geometric
quantities of individual bodies. In this sense, they can be viewed
as generalizations of Buffon's needle problem.

In order to formulate the KFF, we dwell on the global geometric characteristics called the 
{\it Lipschitz-Killing curvatures} (LKCs) or intrinsic volumes, which are at the heart of such formulae. Given a $d$-dimensional 
smooth manifold $M$, LKCs are $(d+1)$ intrinsic, geometric functionals denoted as $\{\bm\calL_k(M)\}_{k=0}^d$ 
such that they satisfy following properties:
\begin{itemize}
\item each $\bm\calL_k$ for $k=0,\ldots,d$ is a finitely additive set functional;
\item for any $\lambda >0$, and a {\it nice} set $A$, we have $\bm\calL_k(\lambda A) = \lambda^k\bm\calL_k(A)$, for all 
$k=0,\ldots, \text{dim}(A)$;
\item all $\bm\calL_k$ are rigid motion invariant {\it i.e.}, for any {\it nice} set $A$, and any rigid motion $g$, writing
$gA = \{gx:\,x\in A\}$ we have $\bm\calL_k(gA) =\bm \calL_k(A)$, for all $k=0,\ldots, \text{dim}(A)$;
\item each $\bm\calL_k$ is continuous (we refer the reader to \cite{Klain-Rota} for more details).
\end{itemize}
A simple example of encountering LKCs is the Steiner-Weyl tube formula (itself a special case of the KFF)
$${\cal H}_k(\text{tube}(M,\epsilon))=\sum_{j=0}^{\text{dim} M}\epsilon^{k-j}\text{Vol}(B_{\real^k}(1))\bm{\calL}_j(M),$$
where $B_{\real^k}(1)$ is the unit ball in $\real^k$ and $\text{tube}(M,\epsilon)=\{y\in \real^k: \inf_{x\in M}\|x-y\|\leq \epsilon\}$. This gives the volume of an $\epsilon$-tubular neighborhood around a wide class of sets $M\subset \real^k$.

Equipped with the above definition/characterization of LKCs, we now state the most general Euclidean KFF,
as it appears in \cite{RFG}. Let $M_1$ and $M_2$ be two nice sets in 
$\real^d$, and let $\calG_d$ be the group of rigid motions on $\real^d$, then
\begin{equation}\label{eqn:KFF}
\int_{\calG_d} \bm\calL_m\left( M_1 \cap gM_2\right)\,\nu(dg) 
= \sum_{j=0}^{d-m} \frac{s_{m+1}\, s_{d+1}}{s_{m+j+1} \, s_{d-j+1}}\bm\calL_{m+j}(M_1)\bm\calL_{d-j}(M_2)
\end{equation}
where $\nu$ is the normalised Haar measure on $\calG_d$, and $s_k$ denotes the surface area of a unit 
ball in $\real^k$. 

\begin{remark}
For a definition of {\it nice} sets we refer to \cite{RFG,Brocker-Kuppe}.
\end{remark}

KFFs have a rich history, and we refer the reader to 
\cite{RFG, Brocker-Kuppe, Klain-Rota, Schneider-Weil-92, Schneider-Weil-08}, 
and references therein, for an exhaustive account. 
All the available proofs of KFF are delicate, and rely heavily on various invariances available in the
setup, like the invariance of Lebesgue measure under $\calG_d$ plays a crucial role.
A natural question then, one may ask, is if such integral formulae are exclusive only to Euclidean space
with Lebesgue measure.


\subsection{Excursion sets of Gaussian processes}

Interestingly, another problem which bears striking resemblance with Buffon's needle problem
got many mathematicians interested. In order to exhibit the similarity, we can hypothesize the problem as 
having to sample a {\it random} path, instead of throwing a needle, and then counting the number
of crossings of this random curve with a fixed line.

In 1940s, \cite{kac} and \cite{rice} solved this problem analytically with some basic regularity assumptions. 
Using a clever argument to count the number of crossings of a given function, Kac and Rice, working independently, obtained
a compact expression for the mean number of crossings of a random algebraic function under some mild
regularity conditions.

%
%
 Revisiting \eqref{eqn:KFF}, the case $m=0$ gives rise to the expected Euler-Poincar\'e characteristic (called Euler characteristic in the rest of this paper) denoted as $\bm\chi(.)$. The expected Euler characteristic of excursion sets of smooth random fields $f$ on a $C^3$ domain $M$ defined as
$$\E[\bm\chi\{t\in M: f(t)\geq u\}],$$
has been studied extensively in \cite{Adler81, adler2000excursion, worsley1994local, worsley1995boundary, taylor2003euler, Taylor06}. The  derivation of expected Euler characteristic for stationary Gaussian random fields dates back to \cite{Adler81}, with generalizations to $\chi^2$, F and t-fields and to higher dimensions in \cite{worsley1994local, worsley1995boundary}. \cite{Adler81}, and later, \cite{taylor2003euler}, generalized the 
counting technique in  \cite{kac,rice} to the multiparameter case.
These papers set the stage for what is now called {\it expectation metatheorem} 
which can be viewed as quite general form of Kac-Rice formula
(see \cite{RFG} for details). The expectation metatheorem can be stated as: let $n, k\ge 1$, and 
$$G=(G_1,\ldots, G_n) \text{ and } H=(H_1,\ldots,H_k),$$ be two $\real^n$ and $\real^k$ valued a.s. continuous random fields defined
on an $n$-dimensional, parameter space $T\subset\real^n$ such that $T$ is smooth and compact. Let $U$ be
an open subset of $\real^k$ such that the Hausdorff dimension of the boundary $\partial U= \overset{-}{U} \setminus U$ 
is $(k-1)$, then writing $$N_u(G,H;T,U) = \{x\in T:\, G(x)=u, \text{ and } H(x)\in U\},$$ under some regularity conditions 
(see \cite[Theorem 11.2.1]{RFG}), we have
\begin{equation}\label{eqn:meta}
\E\left( N_u(G,H;T,U)\right) = \int_{T} \E\left\{|\det\grad G|\, 1_{U}(H(x))\biggr \vert G(x)=u\right\} p_x(u)\, dx,
\end{equation}
where $p_x$ is density of the random variable $G(x)$.

In \cite{taylor2003euler}, the expected Euler characteristic for centered and unit variance, smooth Gaussian random fields $f$ 
on a smooth manifold $M$, based on the expectation meta theorem, was shown as a decoupling into LKCs of the manifold 
$M$ and coefficients that are products of Hermite polynomials with the standard Gaussian density. That is
$$\E[\bm\chi(M\cap f^{-1}[u,\infty))]=\sum_{j=0}^{n} \bm{\calL}_{j}(M)\rho_j(u),$$
with 
\[ \rho_j(u)=\begin{cases} 
      1-\Phi(u) & j= 0 \\
      \cfrac{1}{(2\pi)^{(j+1)/2}}H_{j-1}(u)\exp(-u^2/2) & j\geq 1. 
       \end{cases}
\]

\subsection{Gaussian integral geometry and the GKF}

\cite{Taylor06} provided geometric meaning to the coefficients which appeared earlier
in \cite{Adler81, taylor2003euler} via a Gaussian tube formula, and also extended the earlier calculations of \cite{Adler81,worsley1994local}
to  a class of multivariate non-Gaussian random fields, which led to the formulation of {\it Gaussian kinematic formula} (GKF). 

In the case of Gaussian random fields studied in \cite{taylor2003euler}, the EC densities $ \rho_j(u)$ are seen to match up to a factor of $(2\pi)^{-j/2}$ with the coefficients $$\bm{\calM}_j^{\gamma^{\real^1}}([u,\infty))=(2\pi)^{-1/2}H_{j-1}(u)\exp(-u^2/2)$$
arising in a Gaussian tubular volume expansion $\gamma^{\real^1}(\text{tube}([u,\infty),\epsilon))=\gamma^{\real^1}([u-\epsilon,\infty))$.

The GKF formula, more generally, can be stated as a decoupling of the mean LKCs of excursion sets into LKCs of $M$ and GMFs that are seen in the tube formula. Suppose $f=F \circ y$
 whose components are smooth, independent and marginally
stationary with marginal law $N(0,1)$. Then,
\begin{equation}
\label{eq:gkfold}
\E[\bm\calL_m(M\cap f^{-1}[u,\infty))]=\sum_{j=0}^{\text{dim }M-m} {{m+j}\brack {j}}(2\pi)^{-j/2}\bm{\calL}_{m+j}(M)\bm\calM_{j}^{\gamma_\real^K}(F^{-1}[u,\infty)),
\end{equation}
with ${{m+j}\brack {j}}=\cfrac{(m+j)!\nu_n}{m! j!\nu_m\nu_j}$ and
$\nu_j=\cfrac{\pi^{j/2}}{\Gamma(n/2+1)}$ the volume of a unit ball in $\real^n.$
\medskip

The above, re-derived in \cite{taylor2009gaussian} can be viewed as recasting \eqref{eqn:KFF} in the form of a KFF over Gaussian function space.
The Gaussian Minkowski Functionals in \cite{taylor2009gaussian} are defined
 implicitly in a generalization of the Steiner-Weyl formula
$$
\begin{aligned}
\gamma_{\real^K}(\text{tube}({\cal K}, \epsilon)) 
&= \gamma_{\real^K}({\cal K} \oplus B_{\real^K}(\epsilon)) = \sum_{j \geq 0} 
\frac{\epsilon^j}{j!} 
\bm\calM_{j}^{\gamma_\real^K}({\cal K}).
\end{aligned}
$$

\subsection{Extension of GKF}

Formulae for the expected Euler characteristic
of a smooth Gaussian field has already found many important applications in the analysis of cosmological data (CMB), see \cite{CMB-GKF}, \cite{planck2014planck}, \cite{ade2015planck} for more details on this.  
Isotropic vector-valued Gaussian related fields with independent, but non-identically distributed Gaussian building blocks can arise in modeling of polarization data in CMB experiments, necessitating a generalization of GKF in \cite{Taylor06} to multivariate, heterogeneous, Gaussian related random fields. Motivated by applications in CMB experiments, our goal in this paper is to derive a GKF for heterogenous, Gaussian relatives $f=F\circ y$, constructed out of Gaussian fields ${y}={(y_1,y_2,...,y_K)}$ with distributional heterogenity in individual components. That is, the component Gaussian fields are marginally stationary and independent, but non-identically distributed. Specifically, the building blocks are non-identical in distribution in the following sense: 
the gradient field ${\grad y}$ has separable covariance structure 
$$\diag(\lambda_1,...,\lambda_K)\otimes I \overset{\text{def}}{=} D \otimes I,$$ 
and induces conformal Riemannian metrics (with constant conformal factor) on the manifold $M$. As such our prototypical model in this work has
$M=S(\real^3)$ and each $y_i$ is isotropic on the sphere with
possibly different spectral measures.

 The main theorem of the paper derives a (GKF) for the expected Euler characteristic of the excursion sets of such fields $f$, yielding the result
\begin{equation}
\label{eq:gkfnew}
\E[\bm\chi(M\cap f^{-1}[u,\infty)]=\E[\bm\chi(M\cap y^{-1}\calK)]=\sum_{j=0}^{n} (2\pi)^{-j/2}\bm{\calL}_j(M)\bm\calM_{j}^{\gamma_\real^K,D}(\calK),
\end{equation}
 with $\calK=F^{-1}[u,\infty)$, $\bm{\calL}_j(M)$
the LKCs of $M$, and $\bm\calM_{j}^{\gamma_\real^K,D}(\calK)$
the Gaussian Minkowski functionals. 

In comparing \eqref{eq:gkfold} to \eqref{eq:gkfnew} the reader will notice
that we have introduced a parameter $D$ to the Gaussian Minkowski functionals
in \eqref{eq:gkfnew}.
Define an ellipsoidal tube with as
\begin{equation}
\label{eq:ellipse:tube}
T^{D}(\calK,\ep)=\calK\oplus  B_{D,\real^K}(\ep),
\end{equation}
with
\begin{equation}
\label{eq:ellipse}
B_{D,\real^K}(\epsilon) = \left\{w \in \real^K: w^T D^{-1} w\leq \epsilon^2 \right\}
\end{equation}
recalling that $D=\text{diag}(\lambda_1, \dots ,\lambda_K)$.

The GMFs above are implicitly defined as terms in a Taylor series expansion for the Gaussian volume of $T^{D}(\calK,\ep)$ in terms of integrals on the boundary of $\calK$ given by
\begin{eqnarray}
\label{gaussian expansion}
&& \gamma_{\real^K}(T^{D}(\calK,\ep))= \gamma_{\real^K}({\cal K}) + \sum_{l=1}^{\infty}\dfrac{\ep^l}{l!}\bm\calM_{l}^{\gamma_{\real^K},D}(\calK).
\end{eqnarray}
Note then that the original GMFs in \eqref{eq:gkfold} simply correspond
to the case $D=I$.

\subsection{Outline of the paper}

The main result is stated formally in Section \ref{KFF}, and is established as follows. We show in Section \ref{PP} that the expected Euler Characteristic for the Gaussian related fields under consideration can be expanded in terms of EC densities, computed as integrals with respect to standard Gaussian measure and Lipschitz Killing curvatures (LKC). The LKCs are derived from the Riemannian curvature induced by the base spatial metric $g$ on $M$. Section \ref{IEC} is devoted to obtaining explicit integral representations of EC densities by carefully modifying the tools developed in \cite{Taylor06} to be adapted to our setting. The result follows with the observation that the coefficients in volume expansions of the ellipsoidal tubes considered in Section \ref{GP} and the integral representation of $\{\tilde{\rho}_{j}(F,u); j=1,2,..,n\}$ in Section \ref{IEC} match up to a factor of $(2\pi)^{-j/2}$, leading to an analogous GKF for heterogeneous Gaussian related fields. Finally, in Section \ref{application}
we conclude with an application of our results to the study of cosmic microwave background radiation data.


\section{GKF for heterogeneous Gaussian fields}
\label{KFF}

In this section, we formally state a generalization of the GKF in \cite{Taylor06} which somewhat relaxes distributional assumptions. The essence of such a result lies in the decoupling of spatial information and distributional information of the random field. We begin with
formally describing our assumptions on the $\real^K$ valued field $y$.

\subsection{Set up and assumptions} 
\label{assumption0}

We describe the heterogeneous Gaussian related fields $f=F\circ y$ under consideration in the paper by listing a set of assumptions on the individual Gaussian building blocks $y$ and the function $F$. 

The Gaussian building blocks ${y}={(y_1,..,y_K)}$ in particular are assumed to satisfy the following assumptions:
\begin{enumerate}[label={\textnormal{\textbf{(\Alph*)}}},itemsep=2pt,parsep=2pt]
\item Marginal Stationarity and Independence: ${(y_1,..,y_K)}$ are individually real-valued, mean $0$, unit variance, independent random fields on manifold $M$. \label{item:1}
\item Separability of gradient field: We assume a separable structure for the covariance of the gradient field \label{item:2}
\begin{equation}
\label{KP}
{\text{Cov}({\grad y})={D \otimes I} .}\nonumber
\end{equation}
where ${ D=\diag(\lambda_1,...,\lambda_K)}$. Here $D$ represents the covariance amongst the random fields, while $I$ denotes the spatial covariance.
\item Metric Conformity: Each $y_k$ induces a metric $g_k$ on the manifold $M$ such that
$$g_{i,j}^{k}=g^{k}(E_i,E_j)=\lambda_{k}g(E_i,E_j),$$
where $\{E_i\}$ being an orthonormal frame field with respect to base spatial metric $g$ and $\lambda_{k}$ being the second spectral moment of the field $y_k$. \label{item:3}
\item Regularity: The tuple $ Y_k={(y_k(t),\grad y_k(t),\grad^2 y_k(t))}$ should satisfy:
$$\PP(\sup_{u\in \calB(t,h)}\|{Y_k}(t)-{Y_k}(u)\|_2>\epsilon)=o(h^n),$$
for any $\epsilon>0$,
for the metric $\|\|_2$ defined as :$$\|{Y_k}(t)\|_2=|y_k(t)| +\|\grad y_k(t)\|_{\real^n} +\|\grad^2 y_k(t)\|_{\otimes^2\real^n},$$
for ball $\calB(t,h)$ around $t$ with radius $h$.  \label{item:4} 
\end{enumerate}
\begin{remark}
Under the assumption of marginal stationarity, we have independence of the gradient field from the field and hessian evaluated at a point $t$, that is
$$\grad y_k(t) \perp (y_k,\grad^2 y_k)(t) \text{ for } k\in\{1,2,...,K\}.$$
Separability along with marginal stationarity ensures that
$$\text{Cov}\left(\cfrac{\partial y_k(t)}{\partial t_i},\cfrac{\partial y_k(t)}{\partial t_j}\right)=-\text{Cov}\left(y_k(t),\cfrac{\partial^2 y_k(t)}{\partial t_i\partial t_j}\right)=\lambda_k \delta_{i,j} I \text{ for } k\in\{1,2,...,K\}.$$
\end{remark}
\begin{remark}
We can choose to replace assumptions \ref{item:1} and \ref{item:3} with the stronger assumption of isotropic, centered and unit variance, independent Gaussian random fields. Isotropy would imply metric conformity, as desired in \ref{item:3}.
 \end{remark}

\begin{remark}
\label{scale}
We can in fact assume that in \ref{item:2}  that $$\text{Cov}({\grad y})={D \otimes \nu I} \text{ for some } \nu \in \real^{+},$$
with a scaled spatial covariance matrix $\nu I$.
In such a case, the spatial scale parameter $\nu$ shows up in our GKF in \ref{KFF-gen} as scaled LKCs with a scaling factor of $\nu^{j/2}$ for $\bm\calL_j$. We elaborate on this remark in Section \ref{PP} in Remark \ref{scale:exp}. In all our of our calculations, we assume $\nu=1$.
\end{remark}
Note that, we enforce distributional heterogenity amongst the building blocks $$\{y_i,\;\;i \in 1,2,..,K\},$$ by considering a KP separable covariance structure for the gradient fields $\grad y$, such that metrics induced by them are conformal in nature.
We emphasize that all the geometric calculations are with respect to the base spatial metric $g$.

\indent We need some assumptions on the function $F$, which when composed with the above non-identically distributed but independent Gaussian fields $ y$, yields heterogeneous Gaussian relatives $f$, our fields of interest.
\noindent We assume that $F\in \calC^2(\real^K)$ is real valued and satisfies for some $\epsilon>0$:
\begin{enumerate}[label=\textnormal{\textbf{(\arabic*)}},itemsep=2pt,parsep=2pt]
\item $\|\grad F\|$ is bounded on both sides on $F^{-1}(u-\epsilon, u+\epsilon)$ \label{itm:1}
\item $\grad F$ is Lipschitz on $F^{-1}(u-\epsilon,u+\epsilon)$ \label{itm:2}
\item Functions $\tilde{\calC}_F$, discussed in \eqref{func} are continuous in $(u-\epsilon, u+\epsilon)$ \label{itm:3}
\item $\lim_{\varepsilon\to 0}\cfrac{1}{2\varepsilon}\E\left[1_{\{|F(y)-u|<\varepsilon\}}|H_{n-1-l}(-\langle y,DF(y)\rangle) \|D^2\grad^2 F\|_{\otimes^{2l}\real^K}| \right]<\infty$, for all $n,l$. \label{itm:4}
\end{enumerate}

Finally, we assume that the domain set $\calK=F^{-1}[u,\infty)$ is smooth and convex, with shape operator of $\partial\calK$ bounded and critical radius of $\calK$ positive.

\begin{remark}
When the set $\calK$ is non-smooth, but locally convex, then similar calculations hold, though we will 
have to be careful about breaking up calculations on different pieces.
\end{remark}

\subsection{Main result}

With ${y}$, $F$ and domain set $\calK$ satisfying assumptions listed in Section \ref{assumption0} above, 
we are ready to state the GKF theorem for the heterogeneous Gaussian relatives with independent, 
marginally stationary components $$\{y_i, i=1,2,...,K\}$$ having separable structure for the gradient field 
and inducing conformal metrics. Even more strictly, we can consider isotropic Gaussian components with
separability for corresponding gradient fields. We outline the proof briefly in this section, proving the details 
of the results involved in the GKF in the following sections.

\begin{thm}
\emph{GKF generalized to heterogeneous Gaussian Related Fields:}
\label{KFF-gen} For $F\in \calC(\real^K)$ satisfying \ref{itm:1}, \ref{itm:2}, \ref{itm:3} and \ref{itm:4} in the assumptions for $F$, $\calK=F^{-1}[u,\infty)$ convex and smooth, and Gaussian random fields  $y=(y_1,..,y_K)$, satisfying assumptions \ref{item:1}, \ref{item:2},  \ref{item:3} and  \ref{item:4} on $M$, the kinematic formula for $f=F\circ y$ can be expressed as
\begin{equation}\label{eqn:main}
\E[\bm\chi(M\cap y^{-1}\calK)]=\sum_{j=0}^{n} (2\pi)^{-j/2}\bm\calL_j(M)\bm\calM_{j}^{\gamma_\real^K,D}(\calK),
\end{equation}
where the coefficients in the above expansion decouple into $\{\bm\calL_j(M)\}_{0\leq j\leq n}$  (LKCs), computed with respect to spatial metric $g$ and 
$\{\bm\calM_{j}^{\gamma_\real^K,D}(\calK)\}_{0\leq j\leq n}$ (GMFs), arising as coefficients in the Taylor series expansion of Gaussian volumes of ellipsoidal tubes as in \eqref{eq:gkfold}
\end{thm}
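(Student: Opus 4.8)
The plan is to establish \eqref{eqn:main} in three stages that mirror Sections \ref{PP}, \ref{IEC} and \ref{GP}, and then to assemble them by matching coefficients. The first stage is a Gaussian Morse-theoretic computation. I would represent $\bm\chi(M\cap y^{-1}\calK)$ through the expectation metatheorem \eqref{eqn:meta}, writing the Euler characteristic as an alternating count of critical points of a Morse function on $M$ whose values lie in the excursion set, and then take expectations. The crucial structural input is assumption \ref{item:2}: because $\text{Cov}(\grad y)=D\otimes I$ is separable, the integrand factors into a part depending only on the spatial derivatives (through the block $I$) and a part depending only on the cross-component structure (through $D$). This separation is exactly what forces the answer to decouple as $\sum_j(\text{spatial})_j\cdot(\text{distributional})_j$. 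I would identify the spatial factors as the Lipschitz--Killing curvatures $\bm\calL_j(M)$ of the \emph{base} metric $g$, using the conformity assumption \ref{item:3} to absorb every $\lambda_k$ into the distributional side so that $g$, rather than any field-induced metric, survives; I would name the distributional factors $\tilde\rho_j(F,u)$.

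The second stage pins down explicit integral representations for the densities $\tilde\rho_j(F,u)$. Here I would condition on $y(t)=x\in\partial\calK$ and evaluate the conditional expectation of the signed Hessian determinant appearing in \eqref{eqn:meta}. A useful crisp observation is that, by assumption \ref{item:1}, the field marginal satisfies $y(t)\sim N(0,I_K)$, so the ambient Gaussian measure $\gamma_{\real^K}$ is isotropic and the anisotropy lives \emph{only} in $\text{Cov}(\grad y)=D\otimes I$. Using marginal stationarity, which gives $\grad y_k\perp(y_k,\grad^2 y_k)$ at a point, the Hessian integral collapses to a Gaussian expectation of a Hermite-type polynomial. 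The heterogeneity enters precisely at this step: the covariance $D\otimes I$ converts the usual Hermite argument into the $D$-weighted form $-\langle y,D\grad F(y)\rangle$ and inserts $D$-factors into the curvature term $\|D\grad^2 F\|$, matching the quantity whose integrability is posited in \ref{itm:4}. I would carry this out by adapting the corresponding computation of \cite{Taylor06} term by term, replacing each occurrence of the isotropic inner product on $\real^K$ by the $D$-weighted one.

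The third stage, carried out independently, is the Gaussian volume expansion of the ellipsoidal tube $T^{D}(\calK,\ep)$ from \eqref{eq:ellipse:tube}. Using the positive-critical-radius hypothesis on $\calK$, the normal map of the $D$-ellipsoidal tube is a diffeomorphism for small $\ep$, so $\gamma_{\real^K}(T^{D}(\calK,\ep))$ can be written as a boundary integral over $\partial\calK$ whose Jacobian is a polynomial in $\ep$ with coefficients given by the symmetric functions of the $D$-weighted shape operator of $\partial\calK$. Extracting the coefficient of $\ep^{l}/l!$ identifies $\bm\calM_{l}^{\gamma_{\real^K},D}(\calK)$ as a Gaussian boundary integral. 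Comparing this with the stage-two representation of $\tilde\rho_l(F,u)$, I expect the two to agree up to the factor $(2\pi)^{-l/2}$; substituting back into the stage-one decoupling then yields \eqref{eqn:main}.

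The hard part will be the bookkeeping in the second and third stages, namely ensuring that the matrix $D$ threads through both computations in precisely the same way. In the Kac--Rice computation $D$ enters through the gradient covariance and the induced law of $\grad^2 y$, whereas in the tube computation it enters through the geometry of the ellipsoidal ball $B_{D,\real^K}(\ep)$; the content of the theorem is that these two mechanisms produce identical $D$-weighted Hermite and curvature integrals. The most delicate point is the anisotropy mismatch: the ambient measure $\gamma_{\real^K}$ stays isotropic while the tube is $D$-ellipsoidal, and I would have to verify that this mismatch is exactly compensated by the $D$-weighting coming from $\text{Cov}(\grad y)$, so that the base-metric LKCs $\bm\calL_j(M)$ remain unmodified. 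Getting this compensation right—rather than, say, leaving a residual $\det(D)^{1/2}$ or a wrong power of $\lambda_k$ in either representation—is where the proof must be handled with care.
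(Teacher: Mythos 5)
Your proposal is correct and takes essentially the same route as the paper: your three stages correspond exactly to the paper's Section \ref{PP} (decoupling of the expected Euler characteristic into $\bm\calL_j(M)$ and EC densities via conditioning on $(y,\grad y)$ and the Gaussian double-form expansion), Section \ref{IEC} (integral representation of the densities with the $D$-weighted Hermite and trace terms), and Section \ref{GP} (ellipsoidal-tube volume expansion via the normal-map parametrization and its Jacobian), assembled by the same matching of coefficients up to $(2\pi)^{-j/2}$. The delicate points you flag—threading $D$ consistently through the Kac--Rice and tube computations, and keeping the base-metric LKCs unmodified—are precisely the points the paper's Lemmas \ref{CMV}, \ref{Trace} and \ref{Jacobian} resolve.
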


\begin{proof}
The expected Euler Characteristic is derived in terms of the LKCs on $M$ and the EC density functionals in Section \ref{PP} as
$$\mathbb{E}\left[\chi(M\cap y^{-1}\calK)\right]=\sum_{j=0}^{n}\bm{\mathcal{L}}_{j}(M){\rho}_{j}(F,u).$$
Independently, the Gaussian volume expansion of ellipsoidal tubes is computed as an expansion in GMFs, denoted as $\{\bm\calM_{j}^{\gamma_\real^K,D}(\calK)\}_{0\leq j\leq n}$ in Section \ref{GP}.
Having done the computations above, it is a matter of verifying that the coefficients $\bm\calM_{j}^{\gamma_\real^K,D}(\calK)$ in Theorem \ref{GVT}, followed with the derivation of EC densities $\tilde{\rho}_{n}(F,u)$ for fields $f$ in \ref{ECD} match up to constants. \\ Realizing that  $\partial\calK=F^{-1}{u}$ and $\eta_{y}$, the outward unit normal at $y\in\partial\calK$ in \ref{GVT}, identifies with $\grad F(y)$, we see that 
$$ \left(\dfrac{\lVert  D\eta_{y}\lVert}{\lVert  D^{1/2}\eta_{y} \lVert}\right)^{l-1-m} H_{l-1-m}\left(\cfrac{\langle y,D\eta_{y}\rangle}{\lVert  D\grad \eta_y\lVert}\right)$$
in \eqref{Gaussian Volume1}, coming from a Taylor series expansion of the standard Gaussian density function matches with
$$\left(\dfrac{\lVert  D\grad F(y)\lVert}{\lVert  D^{1/2}\grad F(y) \lVert}\right)^{l-1-m}H_{l-1-m}\left(\dfrac{\langle D\grad F(y),y \rangle}{ \lVert  D\grad F(y)\lVert}\right),$$
in \eqref{EC density expression}.\\
The other term in the $\epsilon$ neighborhood expansion of the ellipsoidal tube, denoted as 
$$\calM_{m+1}^{*}(\calK,dx),$$ derived from the Jacobian of an appropriate transformation, parametrizing the boundary of the ellipsoidal tube, in \ref{Jacobian}, 
matches with $$Tr^{\grad F^{\perp}}(D\grad^2 F_{\vert \grad F^{\perp}}/\lVert  D^{1/2}\grad F(y)\lVert)^{m} \calH_{K-1}(dx)$$ 
in \eqref{EC density expression}.
The two computations in \eqref{Gaussian Volume1} and \eqref{EC density expression} help us conclude that the EC densities indeed, agree up to constants with the coefficients of volumes of ellipsoidal tubes $T^{D}(\calK,\ep)$, yielding
$$\tilde{\rho}_{j}(F,u)=\cfrac{1}{(2\pi)^{j/2}}\bm\calM_{j}^{\gamma_\real^K,D}(\calK),$$
and thus, follows the Kinematic formula.
\end{proof}

As we go through the subsequent sections, all the claims made in the proof above become clear.

\section{Gaussian volumes of ellipsoidal tubes}
\label{GP}
\indent  In this section, we devote our attention to tubular expansions of certain geometric objects, which we refer to as ``ellipsoidal tubes", computed with respect to the standard Gaussian measure. Computations for volumes of regular tube neighborhoods around $M$ can be seen as early as in \cite{weyl1939volume} for an embedded manifold $M\subset \real^n$ and are credited to Steiner for compact, convex $M$. These volume expansions have also found place in \cite{gray1990tubes, schneider2013convex} later. For us, the geometric objects whose expansions lead to a GKF for heterogeneous, Gaussian related fields are no longer the regular tubes. The non-homogeneity in distribution of component Gaussian fields leads to a different geometry, which we call ellipsoidal tubes and define them below.

\indent In particular, we compute the Gaussian volume of an ellipsoidal tube around $\calK$ where $$\calK=F^{-1}[u,\infty)\in \real^K,$$ is assumed to be smooth and convex.

\indent For a positive definite $\Theta\in \real^{K \times K} $, 
we begin with the observation that the boundary of the set $\calK\oplus B_{\Qinv,\real^K}(\ep)$ for a convex, smooth set  ${\calK}$ can be parametrized by the map
$$(x, \eta_x) \to x+\ep \dfrac{\Qinv\eta_{x}}{\lVert \eta_{x}\lVert_ \Qinv},$$ 
where $\eta_x$ is the outward unit normal at $x$ on $\partial \calK$, and a $\Qinv$-norm is defined as:
$$\|x\|^2_{\Qinv}=x^T{\Qinv}x.$$


\begin{lemma}
\label{param}
For ${\calK}$ convex and ${\Theta} \in \real^{K \times K} > 0$, the map
$$ S(N({\calK})) \ni (z, \eta_z) \mapsto z + \epsilon \frac{\Qinv\eta_z}{\|\eta_z\|_{\Qinv}} $$
parameterizes $\partial ({\calK} \oplus B_{\Qinv,\real^K}(\epsilon))$, where
$$\|z-y\|^2_{\Qinv^{-1}}=(z-y)^T{\Theta^{-1}}(z-y).$$  
\end{lemma}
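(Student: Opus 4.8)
The plan is to reduce the ellipsoidal tube to an ordinary Euclidean tube by a linear change of variables, and then invoke the classical Steiner--Weyl parametrization of the boundary of a convex tube. Write $A=\Qinv^{1/2}$ for the symmetric positive definite square root of $\Qinv$. The first observation is that the ellipsoidal ball is a linear image of the Euclidean ball: since $w^{T}\Qinv^{-1}w=\|A^{-1}w\|^{2}$, we have $B_{\Qinv,\real^K}(\ep)=A\,B_{\real^K}(\ep)$. Because linear maps distribute over Minkowski sums, applying $A^{-1}$ gives $A^{-1}\bigl(\calK\oplus B_{\Qinv,\real^K}(\ep)\bigr)=\tilde\calK\oplus B_{\real^K}(\ep)$, where $\tilde\calK=A^{-1}\calK$ is again convex and smooth (a diffeomorphic image of $\calK$). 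Since $A^{-1}$ is a homeomorphism it carries boundaries to boundaries, so it suffices to parametrize $\partial\bigl(\tilde\calK\oplus B_{\real^K}(\ep)\bigr)$ and transport the result back through $A$.

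For the Euclidean tube around the convex body $\tilde\calK$ I would invoke the standard fact that the nearest-point (metric) projection onto a convex set is single valued: every $p\in\partial\bigl(\tilde\calK\oplus B_{\real^K}(\ep)\bigr)$ has a unique closest point $\tilde z\in\tilde\calK$, this point lies on $\partial\tilde\calK$, and $p-\tilde z=\ep\,\tilde\eta$ with $\tilde\eta$ the outward unit normal to $\tilde\calK$ at $\tilde z$. Conversely each such $\tilde z+\ep\,\tilde\eta$ lies on the boundary, and the map $\tilde z\mapsto\tilde z+\ep\,\tilde\eta$ is injective (distinct base points sharing a normal are displaced to distinct points). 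This is exactly the statement that $S(N(\tilde\calK))\ni(\tilde z,\tilde\eta)\mapsto\tilde z+\ep\,\tilde\eta$ parametrizes the Euclidean tube boundary; for smooth $\tilde\calK$ the unit normal bundle $S(N(\tilde\calK))$ is identified with $\partial\tilde\calK$ through its Gauss map.

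It remains to track how the normal vectors transform under $A^{-1}$. The supporting hyperplane $\{x:\langle\eta_z,x-z\rangle=0\}$ of $\calK$ at $z$ maps under $A^{-1}$ to $\{\tilde x:\langle A\eta_z,\tilde x-\tilde z\rangle=0\}$ (using that $A$ is symmetric), so the outward normal direction at $\tilde z=A^{-1}z$ is $A\eta_z$. Normalizing and using $\|A\eta_z\|^{2}=\eta_z^{T}\Qinv\eta_z=\|\eta_z\|_{\Qinv}^{2}$ gives $\tilde\eta=A\eta_z/\|\eta_z\|_{\Qinv}$. Transporting the boundary point back through $A$ then yields
$$A\bigl(\tilde z+\ep\,\tilde\eta\bigr)=z+\ep\,\frac{A^{2}\eta_z}{\|\eta_z\|_{\Qinv}}=z+\ep\,\frac{\Qinv\eta_z}{\|\eta_z\|_{\Qinv}},$$
which is precisely the claimed map; as a sanity check, this displacement $w$ satisfies $w^{T}\Qinv^{-1}w=\ep^{2}$, so it indeed lands on $\partial B_{\Qinv,\real^K}(\ep)$.

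I expect the only real subtlety to be the parametrization (bijectivity) claim for the Euclidean tube rather than the algebra: one must use convexity of $\tilde\calK$ in an essential way to guarantee both surjectivity (every exterior boundary point has a unique foot point on $\partial\tilde\calK$) and injectivity of the normal map, and one must phrase the correspondence at the level of the unit normal bundle $S(N(\calK))$ so that the argument still covers boundary points where the Gauss map of a merely convex (not strictly convex) body fails to be locally injective. The normal transformation computation and the verification that $w$ lies on the ellipsoidal sphere are routine once the change of variables $A=\Qinv^{1/2}$ is in place.
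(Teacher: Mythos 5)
Your proof is correct, and it reaches the parametrization by a genuinely different route than the paper. The paper works directly in the anisotropic geometry: it introduces the distance function $V_{\calK}(y)=\inf_{z\in\calK}\|z-y\|^2_{\Qinv^{-1}}$ of equation \eqref{eqn:V-K}, notes that $\calK\oplus B_{\Qinv,\real^K}(\ep)$ is exactly the sublevel set $\{V_{\calK}\leq\ep^2\}$, uses the KKT conditions of the $\Qinv^{-1}$-nearest-point problem to set up the correspondence $(z,\eta_z)\mapsto z+\Qinv\eta_z$ between $S(N(\calK))$ (up to scaling) and $\calK^c$, and then exploits the scaling identity $V_{\calK}(z+c\,\Qinv\eta_z)=c^2\|\eta_z\|^2_{\Qinv}$ to choose $c=\ep/\|\eta_z\|_{\Qinv}$ and land on the level set $\{V_{\calK}=\ep^2\}=\partial(\calK\oplus B_{\Qinv,\real^K}(\ep))$. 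You instead conjugate by $A=\Qinv^{1/2}$, reduce to the classical nearest-point (Steiner) parametrization of the Euclidean tube around the convex body $A^{-1}\calK$, and transport back, tracking how supporting hyperplanes and hence normals transform; your computation $\tilde\eta=A\eta_z/\|\eta_z\|_{\Qinv}$, the final identity $A\bigl(\tilde z+\ep\tilde\eta\bigr)=z+\ep\,\Qinv\eta_z/\|\eta_z\|_{\Qinv}$, and the membership check $w^T\Qinv^{-1}w=\ep^2$ are all right, and phrasing the Euclidean step on the unit normal bundle correctly covers the non-strictly-convex case. Both arguments ultimately rest on the same convexity facts (uniqueness of the metric projection and its characterization by normal cones); yours black-boxes them into a standard Euclidean statement, which makes the lemma shorter and more modular. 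What the paper's direct approach buys is reusable infrastructure: $V_{\calK}$, the identification of the tube boundary as its level set, and $\grad V_{\calK}$ are used again in the proof of Theorem \ref{GVT}, where the tube is foliated by level sets of $V_{\calK}$ and the co-area formula is applied; with your change of variables, those later computations would have to be carried out in the transformed coordinates and then undone, since the Gaussian measure is not invariant under $A^{-1}$ unless $\Qinv$ is a multiple of the identity.
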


\begin{proof}
For $y \in \real^K$, consider the problem
\begin{equation}
\minimize_{z \in {\calK}}  \|z-y\|^2_{\Qinv^{-1}}
\end{equation}
where
$$
\|z-y\|^2_{\Qinv^{-1}}=(z-y)^T{\Qinv^{-1}}(z-y)
$$
as defined in the statement of the Lemma.
For $y \in {\calK}^c$, solving the above problem yields a pair
$$
\left(\hat{z}(y), y - \hat{z}(y)\right) \in N(\partial {\calK})
$$
satisfying the KKT conditions
$$
{\Qinv^{-1}}(\hat{z}(y) - y) = -c(y) \cdot (y - \hat{z}(y))
$$
for $c(y) \geq 0$ and $y - \hat{z}(y)$ is in the normal cone to ${\calK}$ at $\hat{z}(y)$.

\noindent Hence, solving this problem determines a map $h:{\calK}^c \rightarrow N(\partial {\calK})$. This map has
an inverse $\bar{h}$ defined
$$
(z, \eta_z) \mapsto z + \Qinv\eta_z
$$
with $\Qinv=\Q^{-1}$.
That is, 
$$
{\Qinv^{-1}}(z - \bar{h}(z, \eta_z)) = \eta_z,
$$
and hence
$$
z = \argmin_{w \in {\calK}}  \|w - \bar{h}(z, \eta_z)\|^2_{\Qinv^{-1}}.
$$

\noindent (At the risk of being a little pedantic we are identifying the matrix $\Qinv$ with a linear mapping $T_y\real^K \rightarrow T_y\real^K$ assuming that the basis
of this mapping in the standard basis is $\Qinv$. It therefore makes sense to write $\Qinv E$ and $\Qinv^{-1} E$ for vector fields on $\real^K$ or $\partial {\calK}$.)\\

\noindent Each $(z,\eta_z)$  has a value associated to it: $V_{\calK}(\bar{h}(z,\eta_z))$ where
\begin{equation} \label{eqn:V-K}
V_{\calK}(y) = \inf_{z \in {\calK}}  \|z-y\|^2_{\Qinv^{-1}}.
\end{equation}
We note here that
$$
\left\{y: V_{\calK}(y) \leq \epsilon^2 \right\} = {\calK} \oplus B_{\Qinv,\real^K}(\epsilon).
$$

\noindent We parameterize the set
$$
\left\{y: V_{\calK}(y)=\epsilon^2 \right\} = \partial ({\calK} \oplus B_{\Qinv,\real^K}(\epsilon))
$$
by constructing a map from $S(N({\calK}))$, the unit normal vectors of ${\calK}$ to this set.
Clearly,  
$$
V_{\calK}(\bar{h}(z, c \eta_z)) = c^2 \|\eta_z\|_{\Qinv}^2 \qquad \forall (z, \eta_z) \in S(N({\calK})), c > 0
$$
hence choosing 
$$
c(z, \eta_z) = \frac{\epsilon }{\|\eta_z\|_{\Qinv}}
$$
yields the desired parameterization.
\end{proof}

\begin{corollary}
\label{param_D}
Taking $\Qinv =D= \text{diag}(\lambda_1, \dots, \lambda_K)$ yields the map which parametrizes the boundary of $T^{D}(\calK,\ep)=\calK\oplus  B_{D,\real^K}(\ep) $ where $$\calK=F^{-1}[u,\infty)\in \real^K,$$ which we assume to be convex and smooth and $ B_{D,\real^K}(\ep)$ is the ellipsoid given by the set $$\{x\in \real^K: x^{T}D^{-1}x\leq \ep^2\}.$$  
\end{corollary}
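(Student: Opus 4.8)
The plan is to read the Corollary off Lemma~\ref{param} by specialization, so the entire content reduces to two checks: (i) that the choice $\Qinv = D$ satisfies the hypotheses of Lemma~\ref{param}, and (ii) that the ball $B_{\Qinv,\real^K}(\ep)$ appearing there coincides with the ellipsoid $\{x \in \real^K : x^T D^{-1} x \leq \ep^2\}$ named in the Corollary.

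First I would confirm the hypotheses of Lemma~\ref{param}. The matrix $D = \text{diag}(\lambda_1,\dots,\lambda_K)$ is positive definite because each $\lambda_k$ is the second spectral moment of $y_k$, hence strictly positive; thus $D \in \real^{K\times K} > 0$ as required. The set $\calK = F^{-1}[u,\infty)$ is convex and smooth by the standing assumptions, so the condition on $\calK$ in Lemma~\ref{param} holds as well, and the Lemma applies verbatim with $\Qinv$ replaced by $D$.

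Next I would identify the ball. In the proof of Lemma~\ref{param}, $B_{\Qinv,\real^K}(\ep)$ is characterized through the value function $V_{\calK}(y) = \inf_{z\in\calK}\|z-y\|^2_{\Qinv^{-1}}$ via the identity $\{y : V_{\calK}(y)\le \ep^2\} = \calK \oplus B_{\Qinv,\real^K}(\ep)$. Specializing to $\calK = \{0\}$ gives $V_{\{0\}}(y) = y^T\Qinv^{-1}y$, so that $B_{\Qinv,\real^K}(\ep) = \{y : y^T\Qinv^{-1}y \le \ep^2\}$; setting $\Qinv = D$ reproduces exactly the ellipsoid declared in the Corollary. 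With both points in hand, Lemma~\ref{param} yields that $(z,\eta_z) \mapsto z + \ep\, D\eta_z/\|\eta_z\|_D$ parametrizes $\partial(\calK \oplus B_{D,\real^K}(\ep)) = \partial T^{D}(\calK,\ep)$, which is the assertion.

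There is no genuine obstacle here, since this is a pure specialization of the already-established Lemma. The only subtlety worth flagging is notational bookkeeping: one must keep straight which quadratic form enters where, as the outward scaling of the normal uses $D$ (through the factor $\Qinv\eta_z$) whereas the quadratic form defining the ellipsoid uses $D^{-1}$.
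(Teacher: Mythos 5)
Your proposal is correct and matches the paper's (implicit) treatment: the corollary is stated without proof precisely because it is a verbatim specialization of Lemma~\ref{param} to $\Qinv = D$, which is positive definite since each $\lambda_k$ is a strictly positive second spectral moment. Your extra check that $B_{\Qinv,\real^K}(\ep)$ coincides with $\{x : x^T D^{-1}x \le \ep^2\}$ (via the value function $V_{\calK}$ with $\calK=\{0\}$) is consistent with the paper's own definition \eqref{eq:ellipse}, so nothing is missing.
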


\indent With the above parametrization of the surface of ${\calK} \oplus B_{D,\real^K}(\epsilon)$, the next lemma evaluates the Jacobian of the transformation 
$$(x, \eta_x)\to x+\ep \dfrac{D\eta_{x}}{\|\eta_{x}\|_{D}},$$
used to project a small patch $\calA_{\epsilon}$ on $\partial({{\calK} \oplus B_{D,\real^K}(\epsilon)})$ onto patch $\calA$ on $\partial{\calK}$.
\begin{Lemma}
\label{Jacobian}
Again, under the assumption of $\calK$ being smooth,  ${\Qinv} \in \real^{K \times K} > 0$  and  $$\eta_z = -\nabla F(z) / \|\nabla F(z)\|_2,$$ taken as the unique
outward pointing normal vector field which can be extended to a smooth vector field on a neighborhood of any patch on $\partial {\calK}$, the change of basis matrix for the transformation
$$(z, \eta_z) \mapsto z + \epsilon \frac{\Qinv\eta_z}{\|\eta_z\|_{\Qinv}}$$
is given by 
$$
\text{det}(I_{K-1} + \epsilon \cdot A(z)) = \sum_{j=0}^{K-1} \epsilon^j \text{detr}_j(A(z)) ,
$$
where,
$$
\begin{aligned}
A(z)_{ij}  &= \frac{1}{\|\nabla F(z)\|_{\Qinv}} \langle \nabla_{E_i} \Theta\nabla F, E_j \rangle_{\Qinv^{-1}} \biggl|_z.  \\
\end{aligned}
$$
\end{Lemma}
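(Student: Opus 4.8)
The plan is to differentiate the boundary parametrization from Lemma~\ref{param} (specialized in Corollary~\ref{param_D}) and read off its surface Jacobian as a genuine endomorphism determinant. Write the map as $\Phi_\ep(z)=z+\ep\,N(z)$ with $N(z)=\Theta\eta_z/\|\eta_z\|_{\Theta}$; since $\eta_z=-\nabla F(z)/\|\nabla F(z)\|_2$ is the outward unit normal, $N(z)=-\Theta\nabla F/\|\nabla F\|_{\Theta}$. Fix $z\in\partial\calK$, extend $\eta$ to a smooth unit field near $z$, and take a frame $\{E_1,\dots,E_{K-1}\}$ of $T_z\partial\calK=(\nabla F(z))^\perp$ that is orthonormal in the $\langle\cdot,\cdot\rangle_{\Theta^{-1}}$ inner product. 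Differentiating gives $d\Phi_\ep(E_i)=E_i+\ep\,\nabla_{E_i}N$, so everything reduces to understanding the operator $v\mapsto\nabla_v N$ along the frame.

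The first, and structurally most important, step is to show that the tangent plane to the ellipsoidal tube at $\Phi_\ep(z)$ coincides with $T_z\partial\calK$. This is where the nearest-point (KKT) geometry of Lemma~\ref{param} is essential: the tube is the level set $\{V_\calK=\ep^2\}$, and at $\Phi_\ep(z)$ the nearest point of $\calK$ is $z$, so the Euclidean normal to the tube is parallel to $\nabla V_\calK(\Phi_\ep(z))\propto\Theta^{-1}(\Phi_\ep(z)-z)=\ep\,\Theta^{-1}N(z)\propto\nabla F(z)$. Thus both the source surface $\partial\calK$ and the image surface $\partial T^{D}(\calK,\ep)$ share the tangent hyperplane $(\nabla F(z))^\perp$, which one can also confirm directly by checking $\langle\nabla_{E_i}N,\nabla F\rangle_2=0$. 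Because source and target tangent spaces agree, $d\Phi_\ep$ restricts to an \emph{endomorphism} $L=I_{K-1}+\ep\,\mathcal{T}$ of the single hyperplane $(\nabla F(z))^\perp$, where $\mathcal{T}(v)=\nabla_v N$, and the surface Jacobian is the honest determinant $\det L=\det(I_{K-1}+\ep\,\mathcal{T})$ — with no Gram-matrix correction and independent of the basis used to represent $\mathcal{T}$.

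It then remains to identify the matrix of $\mathcal{T}$ in the $\Theta^{-1}$-orthonormal frame with $A(z)$. Applying the quotient rule, $\nabla_{E_i}N=-\|\nabla F\|_{\Theta}^{-1}\,\Theta\nabla^2F(E_i)+\|\nabla F\|_{\Theta}^{-2}\,(\Theta\nabla F)\,\nabla_{E_i}\|\nabla F\|_{\Theta}$, and pairing against $E_j$ in $\langle\cdot,\cdot\rangle_{\Theta^{-1}}$ produces two clean cancellations: the normalization term vanishes because $\langle\Theta\nabla F,E_j\rangle_{\Theta^{-1}}=\langle\nabla F,E_j\rangle_2=0$ for $E_j\in(\nabla F)^\perp$, and the first term collapses via $\langle\Theta\nabla^2F(E_i),E_j\rangle_{\Theta^{-1}}=\langle\nabla^2F(E_i),E_j\rangle_2$ (using $\Theta^T=\Theta$). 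This leaves exactly $\pm\|\nabla F\|_{\Theta}^{-1}\langle\nabla_{E_i}\Theta\nabla F,E_j\rangle_{\Theta^{-1}}=\pm A(z)_{ij}$, the sign being governed by the outward-normal orientation; tracking this orientation pins down the statement $\det(I_{K-1}+\ep\,A(z))$. The expansion into $\sum_{j=0}^{K-1}\ep^j\,\mathrm{detr}_j(A(z))$ is then the standard identity expressing $\det(I+\ep A)$ through the elementary symmetric functions of the eigenvalues of $A$ (equivalently the sums of $j\times j$ principal minors).

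I expect the main obstacle to be the first step — proving that the two tangent hyperplanes coincide, so that the map is a genuine endomorphism and its Jacobian is a plain determinant rather than a square-root Gram determinant $\sqrt{\det(d\Phi_\ep^{T}d\Phi_\ep)}$ that is not transparently polynomial in $\ep$. The ellipsoidal nearest-point projection from Lemma~\ref{param} is precisely what supplies this coincidence, and the remaining delicacy is the bookkeeping of the quotient-rule derivative of the normalized field $N$ together with the verification that the $\Theta$-twists cancel exactly in the $\Theta^{-1}$ pairing — which is also why $\langle\cdot,\cdot\rangle_{\Theta^{-1}}$, rather than the Euclidean pairing, is the natural vehicle for writing $A(z)$. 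A conceptually equivalent route is the linear change of coordinates $w=\Theta^{-1/2}y$, which turns the ellipsoidal tube into an ordinary Euclidean tube around $\Theta^{-1/2}\calK$ and reduces the claim to the classical Steiner--Weyl computation; the $\Theta^{-1}$ structure in $A$ is simply the image of the Euclidean shape operator under this map. As a sanity check I would verify the isotropic case $\Theta=I$ with $\calK$ a Euclidean ball of radius $R$, where the formula must reduce to $(1+\ep/R)^{K-1}$.
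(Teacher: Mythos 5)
Your proposal is correct and follows essentially the same route as the paper's proof: both differentiate the normalized displacement field $N(z)=\Theta\eta_z/\|\eta_z\|_{\Theta}$ in a $\langle\cdot,\cdot\rangle_{\Theta^{-1}}$-orthonormal frame of $(\nabla F)^{\perp}$, establish that this derivative has no normal component so the differential is an honest endomorphism of the common tangent hyperplane (the paper checks $\langle \nabla_{E_i}E_K,\eta\rangle_I=0$ directly from its projection identity \eqref{eq:deriv}, where you invoke the nearest-point/level-set gradient of $V_{\calK}$ — an equivalent justification), and then read off the matrix entries and expand the determinant into $\mathrm{detr}_j$'s. Your sign bookkeeping is also sound: substituting $\eta_z=-\nabla F/\|\nabla F\|_2$ genuinely yields the negative of the displayed $A(z)_{ij}$, a sign the paper's own final line silently absorbs, so your remark that the outward-normal orientation must be tracked to pin down $\det(I_{K-1}+\epsilon A(z))$ is apt.
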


\begin{proof}
Let's look at the derivative of a linear projection of our map. Ignoring the term
$z$ define
$$
\param(z) = \frac{ \epsilon \Qinv \eta_z}{\|\eta_z\|_{\Qinv}}.
$$

\noindent A straightforward calculation shows that for any vector field $X$ on $\real^K$
\begin{equation}
\label{eq:deriv}
\begin{aligned}
\langle \nabla_X\param, v \rangle_I &=  \epsilon \left \langle \frac{\Qinv^{1/2} \nabla_X\eta}{\|\eta\|_{\Qinv}}, \left(I - \frac{\Qinv^{1/2}\eta\eta^T\Qinv^{1/2}}{\|\eta\|_{\Qinv}^2}\right) \Qinv^{1/2} v  \right \rangle_I \\
 &=  \epsilon \left \langle \frac{ \nabla_X\eta}{\|\eta\|_{\Qinv}}, \left(\Qinv -  \frac{\Qinv\eta\eta^T\Qinv}{\|\eta\|_{\Qinv}^2}\right)  v  \right \rangle_I. \\
\end{aligned}
\end{equation}
with  $\langle, \rangle_I$ the usual Euclidean inner product, $\Qinv^{1/2}$ the symmetric square root of $\Qinv$ (we could take non-symmetric square-roots if we had to, but it doesn't matter -- everything below
makes sense without square roots),
and $\nabla$ the usual Euclidean Levi-Civita connection (i.e. standard differentiation of vector fields).
\noindent Letting $\Q=\Qinv^{-1}$, define
$$
\langle X, Y \rangle_Q = \langle X, QY \rangle_I.
$$
Now, choose a frame $\{E_1,\dots, E_K\}$ such that
\begin{equation}\label{eqn:ONB}
\langle E_i, E_j \rangle_Q = \delta_{ij}, \qquad 1 \leq i,j \leq K
\end{equation}
with
$$
E_K = \frac{\Qinv \eta}{\|\eta\|_{\Qinv}}.
$$
Noting that $\param(z) = \epsilon E_K(z)$ it suffices to differentiate
$E_K$ to compute the change of measure term.

\noindent Note that, for $1 \leq i \leq K-1, 1 \leq j \leq K$:
$$
\begin{aligned}
0 &= E_i \langle E_K, E_j \rangle_{\Q} \\
&= E_i \langle E_K, {\Q}E_j \rangle_I \\
&= \langle E_K, {\Q}\nabla_{E_i} E_j \rangle_I + \langle \nabla_{E_i}E_K, \Q E_j \rangle_I\\
&= \frac{1}{\|\eta\|_{\Qinv}}\langle \eta, \nabla_{E_i} E_j \rangle_I + \langle \nabla_{E_i}E_K, E_j \rangle_{\Q}\\
&= -\frac{1}{\|\eta\|_{\Qinv}}\langle \nabla_{E_i}\eta,  E_j \rangle_I + \langle \nabla_{E_i}E_K, E_j \rangle_{\Q}.\\
\end{aligned}
$$
The last display uses the assumption that $\langle E_i, \eta \rangle_I = 0$ for $ 1 \leq i \leq K-1$ which is implied 
by our choice of frame $\{E_1, \dots, E_K\}$.

\noindent Our calculation \eqref{eq:deriv} shows that
$$
\begin{aligned}
\langle \nabla_{E_i}E_K, E_K \rangle_{\Q} &= \langle \nabla_{E_i}E_K, \frac{\eta}{\|\eta\|_{\Qinv}} \rangle_{I} \\
&= \frac{1}{\|\eta\|_{\Qinv}} \langle \nabla_{E_i}E_K, \eta \rangle \\
&= 0.
\end{aligned}
$$
Therefore,
$$
\begin{aligned}
\nabla_{E_i}E_K &= \sum_{j=1}^{K} \langle \nabla_{E_i}E_K, E_j \rangle_{\Q} E_j \\
&= - \sum_{j=1}^{K-1} \langle\frac{\eta}{\|\eta\|_{\Qinv}},  \nabla_{E_i}E_j \rangle_{I} E_j. \\
&= \frac{1}{\|\eta\|_{\Qinv}} \sum_{j=1}^{K-1} \langle \nabla_{E_i}\eta, E_j \rangle_{I} E_j. \\
\end{aligned}
$$

\noindent In terms of our frame, this means the change of basis matrix for the transformation is therefore
$$
\text{det}(I_{K-1} + \epsilon \cdot A(z))
$$
where
$$
\begin{aligned}
A(z)_{ij} &= \frac{1}{\|\eta_z\|_{\Qinv}} \langle \nabla_{E_i}\eta, E_j \rangle_{I} \biggl|_z  \\
 &= \frac{1}{\|\eta_z\|_{\Qinv}} \langle \nabla_{E_i}\Theta\eta, E_j \rangle_{\Q} \biggl|_z  \\
 &= \frac{1}{\|\nabla F(z)\|_{\Qinv}} \langle \nabla_{E_i} \Theta\nabla F, E_j \rangle_{\Q} \biggl|_z  \\
\end{aligned}
$$

\noindent Finally, the determinant is a polynomial in $\epsilon$ 
$$
\text{det}(I_{K-1} + \epsilon \cdot A(z)) = \sum_{j=0}^{K-1} \epsilon^j \text{detr}_j(A(z)) .
$$
\end{proof}

\indent To calculate the volume of  $T^{D}(\calK,\ep)$, we compute the surface area of an infinitesimally small patch $\calA_\ep$ on the surface of $T^{D}(\calK,\ep)$ by using the map in \ref{param} and the Jacobian in \ref{Jacobian} to project back to $\partial\calK$. Integrating over $\partial\calK$ and over $[0,\ep]$, we get an expansion for the volume for the ellipsoidal tube. Based on \ref{param} and \ref{Jacobian}, the next corollary gives the surface measure of a small patch $\calA_{\epsilon}$ on the surface of the ellipsoidal tube $\partial({\calK\oplus  B_{\Qinv,\real^K}(\ep)})$.

\begin{corollary}
\label{surface measure}
The surface measure of a patch  $\calA_{\epsilon}$ on the surface of $\partial{(\calK\oplus  B_{\Qinv,\real^K}(\ep))}$ is given by
\begin{eqnarray}
\label{Volume}
&& \calH_{K-1}(\calA_{\ep})=\sum_{j=1}^{K}\dfrac{\ep^{j-1}}{(j-1)!}\calM_{j}^{*}(\calK,\calA)
\end{eqnarray}
where
$$\calM_{j}^{*}(\calK,\calA)=(j-1)!\int_{\calA}detr_{j-1}(A(z)) \calH_{K-1}(dx).$$
\end{corollary}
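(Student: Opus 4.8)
The plan is to realize $\calA_\ep$ as the image of the base patch $\calA$ under the boundary parametrization of Lemma \ref{param}, and then to transport the Euclidean surface measure back to $\partial\calK$ via the area formula, using the Jacobian already computed in Lemma \ref{Jacobian}. Concretely, since $\calK$ is smooth and convex the outward normal is unique at each boundary point, so the assignment $z \mapsto (z,\eta_z)$ identifies $\partial\calK$ (and hence the patch $\calA$) with the corresponding piece of $S(N(\calK))$, and the map $\Phi_\ep(z)=z+\ep\,\Qinv\eta_z/\|\eta_z\|_{\Qinv}$ sends $\calA$ bijectively onto $\calA_\ep$.

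First I would invoke the area formula for the $(K-1)$-dimensional Hausdorff measure,
$$\calH_{K-1}(\calA_\ep)=\int_{\calA} J_{K-1}\Phi_\ep(z)\,\calH_{K-1}(dz),$$
whose validity requires $\Phi_\ep$ to be injective on $\calA$; this is guaranteed for $\ep$ strictly below the critical radius of $\calK$, which we have assumed positive. Next I would identify the surface Jacobian $J_{K-1}\Phi_\ep$ with the change-of-basis determinant of Lemma \ref{Jacobian}. The essential observation is that the image vectors $d\Phi_\ep(E_i)=E_i+\ep\nabla_{E_i}E_K$ were shown there to lie in $\mathrm{span}\{E_1,\dots,E_{K-1}\}=T_z\partial\calK$; thus the tangent plane to $\partial T^{D}(\calK,\ep)$ at $\Phi_\ep(z)$ coincides, as a linear subspace of $\real^K$, with the tangent plane to $\partial\calK$ at $z$. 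Because the domain and image parallelepipeds are then spanned within one and the same subspace, their Euclidean volume ratio is exactly the determinant of the transformation matrix $I_{K-1}+\ep A(z)$, with no metric-normalization factor surviving from the use of the $\Q$-orthonormal frame. Hence $J_{K-1}\Phi_\ep(z)=\det(I_{K-1}+\ep A(z))$ for small $\ep$.

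Substituting the polynomial expansion from Lemma \ref{Jacobian},
$$\calH_{K-1}(\calA_\ep)=\int_\calA\sum_{j=0}^{K-1}\ep^{\,j}\,\text{detr}_j(A(z))\,\calH_{K-1}(dz)=\sum_{j=0}^{K-1}\ep^{\,j}\int_\calA \text{detr}_j(A(z))\,\calH_{K-1}(dz),$$
I would finish by relabeling the summation index so that it runs from $1$ to $K$ and then defining $\calM_j^*(\calK,\calA)=(j-1)!\int_\calA \text{detr}_{j-1}(A(z))\,\calH_{K-1}(dz)$. The factorial is inserted precisely so that $\ep^{\,j-1}\int_\calA\text{detr}_{j-1}(A(z))\,\calH_{K-1}(dz)=\frac{\ep^{\,j-1}}{(j-1)!}\calM_j^*(\calK,\calA)$, which is the claimed expansion.

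I expect the only genuine obstacle to be the rigorous justification that the determinant of Lemma \ref{Jacobian} is the Euclidean $\calH_{K-1}$ Jacobian rather than a frame-weighted quantity. The clean resolution is the parallel-tangent-plane observation above, which renders the determinant metric-independent; the secondary technical point is verifying injectivity of $\Phi_\ep$ so that the area formula applies with no multiplicity correction, and this is exactly what the positive-critical-radius hypothesis supplies.
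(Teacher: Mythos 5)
Your proposal is correct and follows essentially the same route as the paper's own proof: parametrize $\calA_\ep$ as the image of $\calA$ under the map of Lemma \ref{param}, pull the Hausdorff measure back using the change-of-basis determinant $\det(I_{K-1}+\ep A(z))$ from Lemma \ref{Jacobian}, expand it via $\text{detr}_j$, and reindex. The two points you flag as potential obstacles --- injectivity below the critical radius, and the fact that the determinant computed in the $\Q$-orthonormal frame is genuinely the Euclidean Jacobian because domain and image tangent planes coincide as subspaces --- are exactly the steps the paper leaves implicit, and your resolutions of both are sound.
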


\begin{proof}
Letting $\eta_x$ be the unique outward pointing normal vector at $x$ on $\partial \calK$, observe that for a patch $\calA$ on $\partial\calK$
\begin{eqnarray}
\label{Volume}
&& \calH_{K-1}(\calA_{\ep})=\calH_{K-1}\left(\left\{x+\ep \dfrac{\Qinv\eta_{x}}{\|\eta_{x}\|_{\Qinv}}: x\in \calA\right\}\right)\nonumber\\
&=& \int_{\calA}det(I_{K-1} + \epsilon \cdot A(z)) \calH_{K-1}(dx)\nonumber\\
&=&  \sum_{j=1}^{K}\ep^{j-1}\int_{\calA}detr_{j-1}(A(z)) \calH_{K-1}(dx)\nonumber\\
&=&  \sum_{j=1}^{K}\dfrac{\ep^{j-1}}{(j-1)!}\calM_{j}^{*}(\calK,\calA)\nonumber
\end{eqnarray}
with $$\calM_{j}^{*}(\calK,\calA)=(j-1)!\int_{\calA}detr_{j-1}(A(z)) \calH_{K-1}(dx).$$
\noindent The first equality follows from \ref{param}, while the third one follows using the change of basis matrix transformation derived in \ref{Jacobian}. 
\end{proof}

\indent We finally use the Taylor Series expansion of the integral of function $$\vp=\dfrac{1}{(2\pi)^{K/2}}e^{-\lVert x\lVert^2}$$ over $T^{D}(\calK,\ep)$ and \ref{surface measure} to get an expansion of Gaussian volume of $T^{D}(\calK,\ep)=\calK\oplus  B_{D,\real^K}(\ep) $, which gives the main theorem in this section. 
\begin{thm}
\label{GVT}
Under the condition 
\begin{equation}
\label{assumption}
\int_{\partial{\calK}} \dfrac{1}{1+\|z\|^\beta}\calM_j^{*}(\calK,dz) \text{ being bounded for all } 0\leq j \leq K, \text{ for some } \beta>0 ,
\end{equation}
the Gaussian volume of $T^{\Theta}(\calK,\ep)$, denoted by $\gamma^{\real^K}(T^{\Theta}(\calK,\ep))$,
can be represented as the following expansion
 \begin{eqnarray}
\label{Gaussian Volume1}
&& \gamma^{\real^K}(T^{\Qinv}(\calK,\ep))= \bm\calM_{0}^{\gamma_{\real^K},D}(\calK)+\sum_{l=1}^{n+K}\dfrac{\ep^l}{l!}\bm\calM_{l}^{\gamma_{\real^K},D}(\calK)+\calR(\calK),
\end{eqnarray}
with the remainder term in the expansion $\calR(\calK)$ is bounded above as
$$\calR(\calK)\leq \dfrac{\ep^{n+2}}{(n+2)!}C(\calK),$$
$C(\calK)$ being a constant. The coefficients in the expansion are given by
\begin{equation}\label{eqn:M0}
\bm\calM_{0}^{\gamma_{\real^K},D}(\calK)=\int_{\calK}\vp(x) d\lambda_{\real^K},
\end{equation}
and for $l\geq 1$,
\begin{eqnarray}
\label{coeff}
&&\bm\calM_{l}^{\gamma_{\real^K},D}(\calK)=\sum_{m=0}^{l-1}\dbinom{l-1}{m} (-1)^{l-1-m}\int_{\partial\calK}\cfrac{\|\Theta^{1/2}\eta_{x}\|}{\|\eta_{x}\|}\times \left( \dfrac{\|\Theta\eta_{x}\|}{\lVert \Theta^{1/2}\eta_{x}\lVert}\right)^{l-1-m} \\ \nonumber
&& \;\;\;\;\;\;\;\;\;\;\;\;\;\;\;\;\;\;\;\; \times H_{l-1-m}\left(\left\langle x,\dfrac{\Qinv\eta_{x}}{\|\Qinv\eta_x\|}\right\rangle\right)\vp(x)\calM_{m+1}^{*}(\calK,dx)
\end{eqnarray}
with 
$\calM_{j}^{*}(\calK,dx)$ defined in \ref{surface measure} and $\calM_{j}^{*}(\calK,.)=0$ for $j>K$.
\end{thm}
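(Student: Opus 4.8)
The plan is to compute $\gamma^{\real^K}(T^{\Qinv}(\calK,\ep))$ straight from its definition as $\int_{T^{\Qinv}(\calK,\ep)}\vp(x)\,d\lambda_{\real^K}$, and then to differentiate with respect to the tube radius so that the boundary parametrization of Lemma~\ref{param} (with $\Qinv=D$, Corollary~\ref{param_D}) and the Jacobian of Lemma~\ref{Jacobian} can be brought to bear. Peeling off the core $\calK$ immediately produces the zeroth coefficient \eqref{eqn:M0}. For the remaining shell I would use a coarea/Reynolds-transport identity: the boundary point attached to $(z,\eta_z)\in S(N(\calK))$ is $y_s=z+s\,\Qinv\eta_z/\|\eta_z\|_{\Qinv}$, so its outward velocity is $\Qinv\eta_z/\|\eta_z\|_{\Qinv}$, while the Euclidean unit normal to $\partial T^{\Qinv}(\calK,s)$ is $\eta_z$ (from \eqref{eqn:V-K}, $\grad_y V_\calK(y_s)\propto \Qinv^{-1}(y_s-z)\propto\eta_z$ since the closest point is $z$). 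Hence the normal speed of the front is $\langle \Qinv\eta_z/\|\eta_z\|_{\Qinv},\eta_z\rangle_I=\|\eta_z\|_{\Qinv}=\|\Qinv^{1/2}\eta_z\|$.

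Combining this normal factor with the tangential surface Jacobian supplied by Corollary~\ref{surface measure}, I would obtain
\begin{equation*}
\frac{d}{ds}\gamma^{\real^K}(T^{\Qinv}(\calK,s))=\int_{\partial\calK}\vp(y_s)\,\|\Qinv^{1/2}\eta_z\|\,\det(I_{K-1}+sA(z))\,\calH_{K-1}(dz),
\end{equation*}
and integrating $s$ over $[0,\ep]$ recovers the shell contribution. The next step is to expand the two $s$-dependent factors. The Jacobian is already a finite polynomial $\det(I_{K-1}+sA(z))=\sum_{m=0}^{K-1}s^m\,\mathrm{detr}_m(A(z))$, with $\mathrm{detr}_m(A(z))\,\calH_{K-1}(dz)=\calM_{m+1}^{*}(\calK,dz)/m!$. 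For the Gaussian factor I would write $y_s=z+t\hat v$ with $\hat v=\Qinv\eta_z/\|\Qinv\eta_z\|$ and $t=s\,\|\Theta\eta_z\|/\|\Theta^{1/2}\eta_z\|$, so that $\vp(z+t\hat v)=\vp(z)\,e^{-2t\langle z,\hat v\rangle-t^2}$; the Hermite generating function then yields $\vp(y_s)=\vp(z)\sum_{j\ge0}\frac{t^j}{j!}H_j(-\langle z,\hat v\rangle)$.

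Multiplying the two series, integrating $\int_0^\ep s^{m+j}\,ds=\ep^{\,m+j+1}/(m+j+1)$, and reindexing by $l=m+j+1$ should produce exactly the claimed expansion: the parity $H_j(-x)=(-1)^jH_j(x)$ converts the sign into the $(-1)^{l-1-m}$ prefactor, the powers of $t$ produce $(\|\Theta\eta_x\|/\|\Theta^{1/2}\eta_x\|)^{l-1-m}$, the normal speed gives $\|\Theta^{1/2}\eta_x\|/\|\eta_x\|$, and the identity $\binom{l-1}{m}\calM_{m+1}^{*}=\frac{(l-1)!}{(l-1-m)!}\mathrm{detr}_m\,\calH_{K-1}$ together with $\ep^l/(m+j+1)=\ep^l/l$ reconciles the combinatorial constant with the $\ep^l/l!$ normalization in \eqref{Gaussian Volume1}--\eqref{coeff}. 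This verifies the coefficients $\bm\calM_l^{\gamma_{\real^K},D}(\calK)$.

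The main obstacle is the remainder: the Hermite expansion of $\vp(y_s)$ is an infinite series and $\partial\calK$ is non-compact, so the term-by-term integration and truncation must be justified. I would truncate the Gaussian Taylor series at the order needed to reach $\ep^{n+K}$, writing the tail in integral (Lagrange) remainder form; each remainder term pairs a polynomial in $\langle z,\hat v\rangle$ of controlled degree (hence $\le C(1+\|z\|^{\beta})$ for suitable $\beta$) against $\vp(z)$ and the surface measures $\calM_{m+1}^{*}(\calK,dz)$. Condition~\eqref{assumption}, the boundedness of $\int_{\partial\calK}(1+\|z\|^{\beta})^{-1}\calM_j^{*}(\calK,dz)$, is exactly what renders these integrals finite and uniform in $s\in[0,\ep]$, yielding the stated bound $\calR(\calK)\le \frac{\ep^{n+2}}{(n+2)!}C(\calK)$. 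Trading the Gaussian decay of $\vp$ against the polynomial growth of the Hermite factors and the possible growth of $\calM_j^{*}$ on the unbounded boundary is the delicate point; everything else is the geometric bookkeeping sketched above.
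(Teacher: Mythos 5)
Your proposal is correct and takes essentially the same route as the paper: the paper likewise peels off the core $\calK$, foliates the shell via the coarea formula (obtaining the same normal-speed factor $\|\Theta^{1/2}\eta_z\|/\|\eta_z\|$ from the gradient of the $\Theta^{-1}$-distance function), pulls the integral back to $\partial\calK$ using the parametrization and Jacobian of Lemmas \ref{param} and \ref{Jacobian}, Taylor-expands the Gaussian density along the displacement direction to produce the Hermite factors, and bounds the Lagrange-form remainder exactly by combining assumption \eqref{assumption} with the super-polynomial decay of Gaussian derivatives. The only differences are cosmetic: you phrase the coarea step as a Reynolds-transport identity and derive the Hermite factors from the generating function rather than from iterated directional derivatives of $\vp$.
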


\begin{proof}
 The integral of $\vp$ over $T^{\Qinv}(\calK,\ep)$ can be obtained by an application of co-area formula (see equation (7.4.14) of
\cite{RFG}). Notice that $$T^{\Qinv}(\calK,\ep) = \cup_{0\le \delta\le \ep} \partial T^{\Qinv}(\calK,\delta),$$ where the foliations $\partial T^{\Qinv}(\calK,\delta)$
can be considered as level sets of the distance function $V_{\calK}$, defined in equation \eqref{eqn:V-K}, which corresponds to geodesic lengths w.r.t. the Riemannian metric given by
$$\widetilde{g_{ij}} = \lambda^{-1}_i\,\delta_{ij}.$$
Using Lemma L4 of \cite{BGK} we note that
\begin{equation}\label{eqn:grad-dist}\grad V_{\calK} = \frac{\Qinv^{-1}z}{\|\Qinv^{-1/2}z\|},\end{equation}
where $z$ is the vector connecting two points between which the distance is measured.
In this setting, as pointed in Lemma \ref{param}, \begin{equation}\label{eqn:connect}z = \frac{\Theta\eta_x}{\|\Theta^{1/2}\eta_x\|}.\end{equation}
where $$x=\arg\min_{z\in \calK}\|z-y\|^2,$$ for $y$ on $\partial {T^{\Qinv}(\calK,\delta)}$.
Collating equations \eqref{eqn:grad-dist} and \eqref{eqn:connect}, and using
equation (7.4.14) of \cite{RFG}, we obtain the integral of $\vp$ over $T^{\Qinv}(\calK,\ep)$ given by
$$\int_{T^{\Qinv}(\calK,\ep)}\vp(y)dy=\int_{\calK}\vp(y) d\lambda_{\real^K}+\int_{0}^{\epsilon}\int_{\partial\calK}\int_{A_{\delta}} \frac{\|\Qinv^{1/2}\eta_{w(y)}\|}{\|\eta_{w(y)}\|} \vp(y)d\calH_{K-1}(y)d\ep,$$
where $$w(y)=\arg\min_{z\in \calK}\|z-y\|^2.$$

We begin our calculation by computing the surface measure of an infinitesimally small patch $\calA_\delta$ on $\partial T^{D}(\calK,\delta)$. 
Using the parametrization in \ref{param} 
$$ S(N({\calK})) \ni (x, \eta_x) \mapsto x + \delta \param(x), \text{ where } \param(x)=\dfrac{\Qinv\eta_{x}}{\lVert \Qinv^{1/2}\eta_{x}\lVert}.$$
Note that for $y= x + \delta \param(x)$,  we have $w(y)=x$.
We invoke a change of variable argument, followed with a Taylor series expansion to obtain:
\begin{eqnarray}\label{surface area of patch}
&& \int_{\calA_\delta}\frac{\|\Qinv^{1/2}\eta_{w(y)}\|}{\|\eta_{w(y)}\|} \vp(y)d\calH_{K-1}(y)\nonumber\\
&=&  \sum_{t=0}^{K-1}\int_{\calA}\frac{\|\Qinv^{1/2}\eta_x\|}{\|\eta_x\|} \vp(x+\delta\param(x))\dfrac{\delta^{l}}{l!}\calM_{l+1}^{*}(\calK,dx)\nonumber\\
&=&   \sum_{t=0}^{K-1}\dfrac{\delta^{l}}{l!}\int_{\calA}\frac{\|\Qinv^{1/2}\eta_x\|}{\|\eta_x\|} \left(\sum_{j=0}^{n}\dfrac{\delta^{j}}{j!}\dfrac{\partial^{j}\vp}{\partial \param^j}\Bigg\vert_{x}
+\dfrac{\delta^{n+1}}{(n+1)!}\dfrac{\partial^{n+1}\vp}{\partial \param^{n+1}}\Bigg\vert_{\alpha(\delta,x)}\right)\calM_{l+1}^{*}(\calK,dx)\nonumber\\
&=& \sum_{l=0}^{n+K-1}\dfrac{\delta^l}{l!}\sum_{j=0}^{K}\dbinom{l}{j}\int_{\calA} \frac{\|\Qinv^{1/2}\eta_x\|}{\|\eta_x\|} (-1)^{j}\lVert \param(x)\lVert^{j} H_{j}(\langle x,\param(x)/\lVert \param(x)\lVert \rangle)\vp(x)\calM_{l-j+1}^{*}
(\calK,dx)\nonumber\\
&&+\sum_{l=0}^{K-1}\delta^l\int_{\calA}\frac{\|\Qinv^{1/2}\eta_x\|}{\|\eta_x\|} \dfrac{\delta^{n+1}}{(n+1)!}\dfrac{\partial^{n+1}\vp}{\partial \param^{n+1}}\Bigg\vert_{\alpha(\delta,x)}\calM_{l+1}^{*}(\calK,dx)\nonumber
\end{eqnarray}
We see that the remainder in the expansion given by
$$\calR(\calK)=\sum_{l=0}^{K-1}\delta^l\int_{\calA}\frac{\|\Qinv^{1/2}\eta_x\|}{\|\eta_x\|} \dfrac{\delta^{n+1}}{(n+1)!}\dfrac{\partial^{n+1}\vp}{\partial \param^{n+1}}\Bigg\vert_{\alpha(\delta,x)}\calM_{l+1}^{*}(\calK,dx)$$ is indeed bounded above by a constant times
$$\dfrac{\delta^{n+1}}{(n+1)!}\lambda_{\Qinv}^{\text{max}}\max_{1\leq i\leq K}\sup_{z}(1+\|z\|^\beta) \dfrac{\partial^{n+1}\vp}{\partial z_i^{n+1}}\Bigg\vert_{z} \sum_{l=0}^{K-1}\delta^{l}\int_{\calA}\dfrac{1}{1+\|\alpha(\delta,x)\|^\beta}\calM_{l+1}^{*}(\calK,dx),$$
where $\lambda_{\Qinv}^{\text{max}}$ is the largest eigen value of $\Theta$. Under our assumption \eqref{assumption} and the observation that $\vp(x)$ has derivatives decaying to $0$ faster than inverse powers of $x$, the remainder can be bounded above by constant $C(\calK)$.\\

Finally, ignoring remainder and integrating over $\partial \calK$ and over $[0,\ep]$, the Gaussian volume of $T^{D}(\calK,\ep)$ can be approximated as
\begin{align*}\label{volume}
&\int_{\calK}\vp d\lambda_{\real^K}+\sum_{l=1}^{n+K}\dfrac{\ep^l}{l!}\sum_{m=0}^{l-1}\dbinom{l-1}{m}(-1)^{l-1-m}\int_{\partial\calK}\cfrac{\|\Theta^{1/2}\eta_{x}\|}{\|\eta(x)\|}\lVert \param(x)\lVert^{l-1-m}\\
& \;\;\;\;\;\;\;\;\times H_{l-1-m}\left(\left\langle x,\dfrac{\Qinv\eta_{x}}{\|\Qinv\eta_x\|}\right\rangle\right)\vp(x)\calM_{m+1}^{*}(\calK,dx)\nonumber
\end{align*}
which proves
\eqref{Gaussian Volume1} with coefficients given by \eqref{coeff}.
\end{proof}

\section{Expected Euler characteristic}
\label{PP}
\indent In this section, we compute the expected Euler characteristic
$$\E[\bm\chi(M\cap f^{-1}[u,\infty)],$$ 
when $y=(y_1,...,y_K): M\to \real^{K}$, where $y_1,..,y_K$ satisfy regularity and marginal stationarity with the corresponding gradient field satisfying separability and metric conformity and $M$ is a $C^3$ manifold. Similar calculations for the expected Euler characteristic can be found in \cite{Adler81, taylor2003euler, RFG} with motivation to approximate probabilities of exceeding high levels in \cite{adler2000excursion} and an explicit approximation in \cite{taylor2005validity}. Again, we show a reduction of computations to calculation of the Lipschitz curvatures $\{\bm\calL_j(M)\}_{0\leq j\leq n}$, with respect to base spatial metric $g$ 
and EC density functionals $\tilde{\rho}_{n}(F,u)$. This can be viewed as a de-coupling into geometric information about the underlying manifold, captured by LKCs, and information about the distribution of the field, captured by EC densities. \\
\indent We begin this section by stating two lemmas, that constitute the basic tools used in the calculation of the expected Euler characteristic and subsequently, in computation of the integral representation of EC densities in \ref{IEC}. The derivation of the averaged Euler characteristic in \cite{Taylor06} hinges on a clever conditioning on $(y,\grad y)$, that reduces the math to computing the conditional mean and variance of Gaussian random fields and calculating conditional expectation of double forms $(y^*\grad^2 F)^l$. We re-derive these calculations adapted to the heterogeneity in the distribution of the component Gaussian fields.
\noindent The following two lemmas, as emphasized, constitute the main tools for the computations to go through for the heterogeneous Gaussian related random fields. 
\begin{Lemma}
\emph{Conditional Mean and Variance}
\label{CMV}
Consider a heterogeneous Gaussian related field $f=F\circ y$ 
where $y=(y_1,...,y_K): M\to \real^{K}$ is such that $y_i$ for $i \in\{1,2,..,K\}$ are unit variance, 
centered, independent random fields on a n dimensional manifold $M$ with gradient field, $\grad y$ having separable 
covariance structure $D\otimes I$ and inducing conformal metrics, as stated in \ref{item:3} and \ref{item:4}. Then, the conditional mean and covariance of 
$\grad^2f$, conditioned on $(y,\grad y)$, are given by 
\begin{equation}
\label{eqn:final-mean}
\textbf{Mean:\;} \mu_{y,\grad y}=\E\left(\left.\grad^2 f \right\vert y,\grad y\right)
=y^*\grad^2F(y) -\langle D\grad F(y),y \rangle I_n
\end{equation}
where the $(i,j)$ th element of $(y^*\grad^2F(y))_{(i,j)}=\sum_{k=1}^{K}\sum_{k'=1}^{K}
\cfrac{\partial^2 F(y)}{\partial y_k\partial y_{k'}}\cfrac{\partial y_k}{\partial t_i}
	\cfrac{\partial y_{k'}}{\partial t_j}$.

\begin{align*}
\textbf{Variance:\;} \E\left(\left. \left(\grad^2f -  \mu_{y,\grad y}\right)^2\right\vert y,\grad y\right)&=
 - \langle D\grad F(y),D\grad F(y) \rangle I^2 \\
&\;\;\;\;-2\langle D^{1/2}\grad F(y),D^{1/2}\grad F(y) \rangle R \numberthis\label{eqn:final-covariance}
\end{align*}
where $R$ is the Riemannian curvature tensor w.r.t. metric $g$.
\end{Lemma}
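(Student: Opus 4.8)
The plan is to write $\grad^2 f$ explicitly via the chain rule for the second covariant derivative and then take conditional moments, exploiting marginal stationarity and the independence of the building blocks. Fix an orthonormal frame $\{E_i\}$ for the base metric $g$ and use $\grad^2 f(E_i,E_j) = E_iE_j f - (\nabla_{E_i}E_j)f$. Differentiating $f=F\circ y$ twice and collecting terms yields
\begin{equation*}
\grad^2 f = y^*\grad^2 F(y) + \sum_{k=1}^{K}\frac{\partial F}{\partial y_k}(y)\,\grad^2 y_k,
\end{equation*}
where the first summand is the pullback Hessian with entries exactly as in the statement and is measurable with respect to $(y,\grad y)$, while the second summand carries all the randomness that survives conditioning. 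The entire computation therefore reduces to the conditional first and second moments of $\grad^2 y_k$ given $(y,\grad y)$.

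For the mean I would condition term by term. By marginal stationarity $\grad y_k$ is independent of $(y_k,\grad^2 y_k)$, and the components are mutually independent, so $\E[\grad^2 y_k\mid y,\grad y]=\E[\grad^2 y_k\mid y_k]$. Since $(y_k,\grad^2 y_k)$ is jointly Gaussian with $\mathrm{Var}(y_k)=1$, the Gaussian regression formula together with the covariance identity $\mathrm{Cov}(y_k,\grad^2 y_k(E_i,E_j))=-\lambda_k\delta_{ij}$ from the Remark (the connection terms drop out because they pair the gradient against the value) gives $\E[\grad^2 y_k\mid y_k]=-\lambda_k y_k I_n$. Summing against $\partial F/\partial y_k$ and recognizing $\sum_k \lambda_k y_k\,\partial F/\partial y_k = \langle D\grad F(y),y\rangle$ produces the stated mean.

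For the covariance, write the centered Hessian as $\grad^2 f - \mu_{y,\grad y} = \sum_k \frac{\partial F}{\partial y_k}(y)\,W_k$ with $W_k = \grad^2 y_k - \E[\grad^2 y_k\mid y_k]$. Conditionally on $(y,\grad y)$ the $W_k$ are independent with mean zero, so all cross terms vanish and
\begin{equation*}
\E\!\left[(\grad^2 f-\mu_{y,\grad y})^2\mid y,\grad y\right] = \sum_{k=1}^{K}\Big(\tfrac{\partial F}{\partial y_k}(y)\Big)^2\,\E[W_k^2\mid y_k].
\end{equation*}
It then suffices to show the single-field identity $\E[W_k^2\mid y_k] = -\lambda_k^2 I^2 - 2\lambda_k R$, since summing it reproduces $-\langle D\grad F,D\grad F\rangle I^2 - 2\langle D^{1/2}\grad F,D^{1/2}\grad F\rangle R$ by the bookkeeping $\langle D\grad F,D\grad F\rangle=\sum_k\lambda_k^2(\partial_k F)^2$ and $\langle D^{1/2}\grad F,D^{1/2}\grad F\rangle=\sum_k\lambda_k(\partial_k F)^2$.

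The main obstacle is this last single-field computation and the appearance of the Riemannian curvature $R$. Here I would invoke the standard Adler--Taylor identity (see \cite{RFG}) that, for a centered unit-variance Gaussian field whose gradient has covariance $\lambda_k I$, equivalently whose induced metric is $g_k=\lambda_k g$, the conditional covariance of the covariant Hessian is a universal form built from the induced metric and its curvature, reading $-I^2 - 2R_{g_k}$ in a $g_k$-orthonormal frame. The delicate part is translating this into the base frame $\{E_i\}$: because the conformal factor $\lambda_k$ is constant the Christoffel symbols are unchanged, so the $(0,4)$ curvature obeys $R_{g_k}=\lambda_k R$, while rewriting the Hessian in the $g_k$-orthonormal frame $E_i/\sqrt{\lambda_k}$ introduces compensating factors of $\lambda_k$. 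Tracking these factors carefully turns the universal form into $-\lambda_k^2 I^2 - 2\lambda_k R$ and closes the argument. A useful consistency check is the homogeneous case $\lambda_k\equiv 1$, where the expression collapses to the classical $-I^2 - 2R$ of \cite{Taylor06, RFG}.
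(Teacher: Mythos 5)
Your proposal is correct and follows essentially the same route as the paper: the same chain-rule decomposition $\grad^2 f = y^*\grad^2F(y) + \sum_{k}\partial_k F(y)\,\grad^2 y_k$, Gaussian regression for the conditional mean (you do it component-wise, the paper jointly via the block covariance matrices), and the same key ingredient for the variance, namely the Adler--Taylor curvature identity for the induced metric combined with the constant conformal scaling $R(k)=\lambda_k R$. The only cosmetic difference is that you invoke the conditional form $-I^2-2R_{g_k}$ of that identity directly, whereas the paper starts from the unconditional $\E[(\nabla^2 y_k)^2]=-2R(k)$ and lets the conditioning on $y_k$ generate the $I^2$ term; the bookkeeping is equivalent.
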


\begin{proof}
Let $\left\{E_i\right\}_{i=1}^m$ be an orthonormal frame field on $M$, then
\begin{align*}
\grad^2 f(E_i,E_j) &= \grad^2 F(y_*E_i,y_*E_j) + \sum_{k=1}^K \grad^2 y_k(E_i,E_j) \, \partial_k F \\
&= \grad^2 F(y_*E_i,y_*E_j) + \langle \grad F(y), \grad^2 \underline{y}(E_i,E_j)\rangle
\end{align*}
where $\grad^2 \underline{y}(E_i,E_j)$ represents the vector $$\left(\grad^2 y_1(E_i,E_j),\ldots,\grad^2 y_K(E_i,E_j) \right).$$
{\bf Mean:}
\begin{equation}\label{eqn:cm}
\E\left( \left.\grad^2 f(E_i,E_j)\right| y,\grad y\right) = \grad^2 F(y_*E_i,y_*E_j) + \E\left( \left. \langle \grad F(y), \grad^2 \underline{y}(E_i,E_j)\rangle\right|y,\grad y \right)
\end{equation}

\noindent Denoting $\Sigma_{y,\grad y}$ as the covariance of $(y,\grad y)$, where $\Sigma_{y,\grad y}$ has a block diagonal
form given as
$$\Sigma_{y,\grad y} = \left[ \begin{array}{cccc} 
I_K & 0 & \ldots & 0 \\
0 & \lambda_1I_n & \ldots & 0 \\
0 & 0 & \ldots & 0 \\
0 & 0 & \ldots & \lambda_k I_n
\end{array}\right],$$
and observing that the covariance between $\grad^2 \underline{y}(E_i,E_j)$ and $(y,\grad y)$ can be expressed as
$$\Sigma_{\grad^2 \underline{y}(E_i,E_j),(y,\grad y)} = \left[ \begin{array}{cc}
\left. \begin{array}{cccc}
-\delta_{ij}\lambda_1 & 0 & \ldots & 0\\
0 & -\delta_{ij}\lambda_2 & \ldots & 0\\
\vdots & \vdots & \ldots & \vdots \\
0 & 0 & \ldots & -\delta_{ij}\lambda_K \end{array} \right| & \underset{K \times nK}{\underline{0}}
\end{array}\right] = \left[- \delta_{ij}D \,\,\,\,\,\,\,\,\, \underset{K \times nK}{\underline{0}}\right],$$
we compute the conditional mean of $\left.\grad^2 \underline{y}(E_i,E_j)\right|y,\grad y$.
Finally, let us denote by $\Sigma_{\grad^2 \underline{y}(E_i,E_j)}$ the covariance matrix of 
$\grad^2 \underline{y}(E_i,E_j)$. With this notation, the conditional mean 
$\E\left( \left.\grad^2 f(E_i,E_j)\right| y,\grad y\right)$ can be expressed as
\begin{eqnarray*}
&& \E(\grad^2 f(E_i,E_j)| y,\grad f)\\
&=& \grad^2 F(y_*E_i,y_*E_j) + \left(\grad F(y)\right)^T \Sigma_{\grad^2 \underline{y}(E_i,E_j),(y,\grad y)} \Sigma^{-1}_{y,\grad y}\left( \begin{array}{c}y \\ \grad y\end{array}\right) \\
&=& \grad^2 F(y_*E_i,y_*E_j) - \delta_{ij}\langle D^{1/2}\grad F(y), D^{1/2}y\rangle. \\
\end{eqnarray*}
\textbf{Variance:}
\begin{remark}
Recall that, for the case of a single Gaussian field with $R(k)$ as Riemannian curvature tensor under 
the induced metric \cite[Lemma 12.2.1]{RFG}%
\begin{equation*}
-2R(k)=\mathbb{E}\left[ \mathbb{(\nabla }^{2}y_{k})^{2}\right] 
\end{equation*}
which in coordinates can be expressed as
\begin{equation*}
R_{ijmn}(k)=\mathbb{E}\left[ \mathbb{\nabla }^{2}y_{k }(E_{i},E_{m})\mathbb{%
\nabla }^{2}y_{k}(E_{j},E_{n})-\mathbb{\nabla }^{2}y_{k }(E_{i},E_{n})%
\mathbb{\nabla }^{2}y_{k }(E_{j},E_{m})\right] 
\end{equation*}%
where,
\begin{equation*}
\mathbb{\nabla }^{2}y_{k }(E_{i},E_{m})=\partial _{im}^{2}y_{k }-(\nabla
_{\partial _{i}}\partial _{m})y_{k }\text{ .}
\end{equation*}%
\end{remark}

\begin{remark}
Note that in case of Gaussian fields with conformity induced by the gradient fields (or the stricter case of isotropic Gaussian random fields), the induced metric is just a constant
multiple of the Euclidean metric, where the constant is simply the second spectral moment
of the underlying Gaussian random field. In such a case,
writing $R(k)$ and $R$ as the Riemannian curvature w.r.t. the induced and the Euclidean
metrics respectively, we have
$$R(k)(X,Y,Z,W) = \lambda_{k} R(X,Y,Z,W)$$
\end{remark}

\noindent Noting that
$$\left[\grad^ 2 f - \E\left(\grad^ 2 f | y,\grad y\right)\right](E_i,E_j) = 
\langle \grad F(y),\grad^2\underline{y}(E_i,E_j)\rangle - \delta_{ij}\langle D^{1/2}\grad F(y), D^{1/2}y \rangle$$
and
\begin{eqnarray*}
&& \left[\grad^ 2 f - \E\left(\grad^ 2 f | y,\grad y\right)\right]^2(E_i,E_j,E_m,E_n)\\
&=& \left[\grad^ 2 f - \E\left(\grad^ 2 f | y,\grad y\right)\right](E_i,E_m)
\left[\grad^ 2 f - \E\left(\grad^ 2 f | y,\grad y\right)\right](E_j,E_n) \\
&& - \left[\grad^ 2 f - \E\left(\grad^ 2 f | y,\grad y\right)\right](E_i,E_n)
\left[\grad^ 2 f - \E\left(\grad^ 2 f | y,\grad y\right)\right](E_j,E_m)
\end{eqnarray*}
Next conditioning on $(y,\grad y)$ and taking expectation, it is not difficult to see that
\begin{eqnarray*}
&&\E\left(\left.\left[\grad^ 2 f - \E\left(\grad^ 2 f | y,\grad y\right)\right]^2\right| y,\grad y\right)(E_i,E_j,E_m,E_n)\\
&=& - \|D\grad F(y)\|^2 I^2(E_i,E_j,E_m,E_n) - 2\|D^{1/2}\grad F(y)\|^2 R(E_i,E_j,E_m,E_n)
\end{eqnarray*}

\end{proof}

\indent The next lemma is a computational tool to compute the conditional expectation 
$$\E[(y^*\grad^2F)^{l}\vert y,\grad f], $$
which we are left to compute after a tower argument with expectation and plugging in the conditional mean and variance, computed in \ref{CMV}.

\begin{Lemma}
\label{Trace}
Let $E$ be an orthonormal frame bundle on $T\real^K$, then 
$$ \E[(y^*\grad^2F)^{l}\vert y,\grad_E f]
= Tr^{\grad F^{\perp}}(D\grad^2 F_{\vert \grad F^{\perp}})^{l}I^{l}+Err_l$$ where $\grad F^{\perp}(y)$ denotes the 
vector space generated by the span of vectors in $T_y\real^K$ which are orthogonal to $\grad F(y)$,
and
$$Err_l = \text{O}\left( \|\grad_E f\|^2 \|(\grad^2 F)^l\|_{\otimes^{2l}\real^K}\right)$$
where the big-O notation has the same interpretation as in \cite{Taylor06}. 
\end{Lemma}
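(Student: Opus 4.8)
The plan is to condition on $(y,\grad_E f)$ and thereby reduce the claim to a Gaussian moment computation for a ``Wishart-type'' double form, mirroring the argument in \cite{Taylor06} but carrying the heterogeneity matrix $D$ through the conditioning. First I would record that, writing $y_*E_i=\grad y(E_i)\in\real^K$, one has $(y^*\grad^2F)(E_i,E_j)=\langle y_*E_i,\grad^2F(y)\,y_*E_j\rangle$, and that by the marginal stationarity in \ref{item:1} the gradient field $\grad y$ is independent of $y$, so both $\grad F(y)$ and $\grad^2F(y)$ are deterministic once we condition on $y$. By the separability assumption \ref{item:2}, the vectors $\{y_*E_i\}_{i=1}^n$ are jointly centered Gaussian with $\mathrm{Cov}(y_*E_i,y_*E_j)=\delta_{ij}D$. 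Since $(\grad_E f)_i=\langle \grad F(y),y_*E_i\rangle$ is a linear functional of $y_*E_i$, Gaussian regression gives that, conditionally on $(y,\grad_E f)$, the $y_*E_i$ stay independent across $i$ with
\[
m_i:=\E[y_*E_i\mid y,\grad_E f]=\frac{(\grad_E f)_i}{\|D^{1/2}\grad F\|^2}\,D\grad F,\qquad
\Sigma_c:=D-\frac{D\grad F\,\grad F^{T}D}{\|D^{1/2}\grad F\|^{2}}.
\]
One checks $\Sigma_c\grad F=0$ and $\mathrm{range}(\Sigma_c)=\grad F^{\perp}$, so $\Sigma_c$ is exactly the $D$-weighted compression onto $\grad F^{\perp}$.

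Next I would write $y_*E_i=m_i+Z_i$ with $Z_i$ i.i.d.\ $N(0,\Sigma_c)$ and substitute into the double-form power, using the determinant representation $(y^*\grad^2F)^{l}(E_{i_1},\dots;E_{j_1},\dots)=\det\big[(y^*\grad^2F)(E_{i_a},E_{j_b})\big]_{a,b}$ (consistent with the $2\times2$ formula for the square appearing in the variance computation above). Expanding each entry $(m_{i_a}+Z_{i_a})^{T}\grad^2F\,(m_{j_b}+Z_{j_b})$ and taking the conditional expectation yields a sum graded by the number of $m$-factors. Any monomial containing an odd number of $Z$-factors vanishes since the $Z_i$ are centered; hence only even counts of $m$-factors survive. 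The term with zero $m$-factors is the leading term, and every other surviving term carries at least two factors $m_{\bullet}\propto\grad_E f$ together with $l$ copies of $\grad^2F$, which is precisely $Err_l=\mathrm{O}\big(\|\grad_E f\|^{2}\,\|(\grad^2F)^{l}\|_{\otimes^{2l}\real^K}\big)$ with the same convention as in \cite{Taylor06}.

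It then remains to evaluate the leading term $\E[\bar Q^{l}]$, where $\bar Q(E_i,E_j)=Z_i^{T}\grad^2F\,Z_j$. Because the $Z_i$ are i.i.d., the law of $\bar Q$ is invariant under orthogonal changes of the frame $\{E_i\}$, so $\E[\bar Q^{l}]$ must be a scalar multiple of the invariant double form $I^{l}$; evaluating both sides on $(E_1,\dots,E_l;E_1,\dots,E_l)$ reduces the scalar to $\E\big[\det(Z_a^{T}\grad^2F\,Z_b)_{a,b=1}^{l}\big]$. Writing $Z_a=\Sigma_c^{1/2}\zeta_a$ with $\zeta_a$ standard Gaussian and applying the Wishart moment identity $\E\big[\det(\zeta_a^{T}B\zeta_b)_{a,b=1}^{l}\big]=l!\,e_l(B)$, where $e_l(B)$ is the $l$-th elementary symmetric function of the eigenvalues of $B:=\Sigma_c^{1/2}\grad^2F\,\Sigma_c^{1/2}$, together with the similarity $B\sim\Sigma_c\grad^2F=(D\grad^2F)\big|_{\grad F^{\perp}}$ from the first step, identifies the scalar with $\Tr^{\grad F^{\perp}}\!\big((D\grad^2F)\big|_{\grad F^{\perp}}\big)^{l}$. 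This is exactly the leading coefficient in the statement.

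The main obstacle I anticipate is the bookkeeping in this last step: one must verify that the elementary-symmetric-function output of the Gaussian moment computation coincides, for every $l$, with the restricted trace $\Tr^{\grad F^{\perp}}$ of the non-symmetric operator $D\grad^2F$, and that the correct subspace and inner product (here $\grad F^{\perp}$ compressed through $D$) are being used. This is precisely where $D$ genuinely alters the calculation relative to \cite{Taylor06}, since $\Sigma_c$ is no longer a Euclidean orthogonal projection. Controlling $Err_l$ uniformly also requires the Lipschitz regularity of $\grad F$ in \ref{itm:2} to bound $\grad^2F$ on the relevant level sets of $F$.
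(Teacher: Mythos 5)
Your proof is correct, but it follows a genuinely different route from the paper's. The paper's own proof is a one-step reduction to Corollary 2.3 of \cite{Taylor06}: writing $y_*\bar{E}_i=\sum_j\bigl(\lambda_j^{-1/2}\langle \grad y_j,E_i\rangle\bigr)\lambda_j^{1/2}\,\partial/\partial y_j$ and noting that the coefficients are i.i.d.\ standard normal, it introduces the weighted inner product $\langle V,W\rangle_{D^{-1}}=\sum_i \lambda_i^{-1}V_iW_i$ on $T_y\real^K$, under which the pushed-forward frame vectors become standard Gaussian and $\{\lambda_j^{1/2}\partial/\partial y_j\}$ is orthonormal; the cited corollary then applies verbatim, with $\grad F^{\perp}$ read as the span of vectors orthogonal to $D\grad F$. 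You instead re-derive the content of that corollary from scratch in the original coordinates: explicit Gaussian regression (your $m_i$ and $\Sigma_c$), the decomposition $y_*E_i=m_i+Z_i$ with a parity count showing that every surviving non-leading term carries at least two factors of $m_i\propto(\grad_E f)_i$ (hence $Err_l$), and evaluation of the leading term by $O(n)$-invariance plus the Cauchy--Binet/Wishart identity $\E\bigl[\det(\zeta_a^T B\zeta_b)_{a,b\le l}\bigr]=l!\,e_l(B)$. What each buys: the paper's argument is short and makes the conceptual point that heterogeneity is absorbed by a change of inner product (the same mechanism that replaces balls by ellipsoids in Section \ref{GP}), but it leans entirely on the external corollary; yours is self-contained, exposes exactly where the error term and the restricted trace come from, and would survive in settings where one cannot simply cite \cite{Taylor06}. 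Two details to tighten. First, $\Sigma_c\grad^2F$ is not literally ``$(D\grad^2F)|_{\grad F^{\perp}}$'': it equals $PD\grad^2 F$ with $P=I-D\grad F\grad F^T/\|D^{1/2}\grad F\|^2$ the \emph{oblique} projection onto $\grad F^{\perp}$ along $D\grad F$ (equivalently, the $D^{-1}$-orthogonal projection), so your $e_l$ is that of the compression of $D\grad^2F$ to $\grad F^{\perp}$ along $D\grad F$; this is exactly what the paper's phrase ``orthogonal to $D\grad F$'' encodes, so the identification with $\Tr^{\grad F^{\perp}}(\cdot)^l$ is right, but the non-Euclidean nature of the projection should be stated explicitly. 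Second, fix once and for all the factorial convention relating the double-form power to the determinant: it enters both the evaluation of $I^l$ and the Wishart identity, and the constants cancel only if the same convention is used on both sides of your invariance argument.
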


\begin{proof} Notice that
the conditional expectation in above lemma has the same form as that in Corollary 2.3 of \cite{Taylor06}.
However, the inner product on $T_y\real^K$ needs to be defined appropriately so as to match the statement
of the aforementioned corollary from \cite{Taylor06}. More precisely,  suppose we take an orthonormal frame
$\{\bar{E}_{1,x}, \dots, \bar{E}_{d,x}\}$ in the parameter space $M$, we could rewrite the conditional expectation as
$$ \E[(y^*\grad^2F)^{l}\vert y,\grad_E f] = 
\E\left[ (y^*\grad^2F)^{l}\vert y, \langle y_* E_1,\grad F\rangle,\ldots,\langle y_* E_d,\grad F\rangle\right] $$
Now consider
$$
X_{i,y(x)} = y_*\left(\bar{E}_{i,x} \right) = \sum_{j=1}^K \langle\grad y_j,E_{i,x}\rangle \frac{\partial}{\partial y_j}\biggl|_{y(x)}.
$$
This is Gaussian on $T_{y(x)}\real^K$ and as $i$ ranges over $\{1,\dots,d\}$ we get IID copies, their distribution is 
$\gamma_V$ (conditional on $y(x)$). 
However to invoke Corollary 2.3 of \cite{Taylor06}, 
we shall rewrite
$$
X_{i,y(x)} = \sum_{j=1}^k \left(\lambda_j^{-1/2} \langle\grad y_j,E_{i,x}\rangle\right)\lambda_j^{1/2} \frac{\partial}{\partial y_j}\biggl|_{y(x)}
$$
where $\left(\lambda_j^{-1/2} \langle\grad y_j,E_{i,x}\rangle\right)$ are i.i.d. standard normal. This, in turn leads us to
define a new inner product $\langle\cdot,\cdot\rangle_{D^{-1}}$ on $T_{y(x)}\real^K$ as 
$$\langle V,W\rangle_{D^{-1}} = \sum_{i=1}^K \lambda^{-1}_{i}V_i(x)W_i(x)$$
ensuring that
$$
\left(\lambda_j^{1/2} \frac{\partial}{\partial y_j}\biggl|_{y(x)}\right), \qquad 1 \leq j \leq K
$$
is an orthonormal basis on $T_y\real^K$ w.r.t. the new inner product.
With this notation, we define $\grad F^{\perp}$ as the linear subspace of $T_y\real^K$ consisting of vectors 
orthogonal to $D\grad F$.



\end{proof}

\indent The next theorem computes the conditional expectation of double form $-(\grad^2 f)^n$, when conditioned on $y$ and $\grad f$. This leads to the derivation of the expected Euler Characteristic of excursion set, 
$$M\cap f^{-1}[u,\infty)=\{t\in M: f(t)\geq u\}.$$
\begin{thm}
\emph{Expected Euler Characteristic:}
\label{GGKF}
Let heterogeneous Gaussian related field, as considered in \ref{CMV}, be denoted by $f=F\circ y$, with component heterogeneous, but independent Gaussian units $$y=(y_1,...,y_K): M\to \real^{k}.$$
\begin{enumerate}[label=(\Alph*).]
\item For each $t\in M$ 
\begin{equation}
\label{series}
\frac1{n!}\E[(-\nabla ^{2}f)^{n}|y,\grad f](t)=\sum_{j=0}^{\lfloor \frac{n}2\rfloor}\dfrac{(-R_t)^j}{j!}\alpha_j(t)
\end{equation}
where $R$ is the Riemannian curvature tensor of the Manifold wrt to metric g and $\alpha_j(t)$ represent random double forms.
\item The expected Euler Characteristic can be represented as 
\begin{equation}
\label{series 1}
\mathbb{E}\bm\chi(M\cap f^{-1}([u,\infty))=\sum_{j=0}^{n}\bm{\mathcal{L}}_{j}(M){\rho}_{j}(F,u), 
\end{equation}
for functionals ${\rho}_{j}(F,u)$ representing the EC densities and $\bm{\mathcal{L}}_{j}(M)$ representing the LKC measures.
\end{enumerate}
\end{thm}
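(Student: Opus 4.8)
The plan is to realize the expected Euler characteristic through a Morse-theoretic signed count of critical points and the expectation metatheorem \eqref{eqn:meta}, and then to feed in the conditional Hessian expansion of Part (A). Since the regularity assumption \ref{item:4} makes $f$ almost surely Morse on $M$, the Euler characteristic of $A_u=M\cap f^{-1}[u,\infty)$ is a signed count of critical points lying above the level $u$, $\bm\chi(A_u)=\sum_{t:\,\grad f(t)=0}(-1)^{\mathrm{index}(\grad^2 f(t))}\,1_{\{f(t)\geq u\}}$. Recognizing (following the sign convention of \cite{RFG}, Chapter 12) the signed count as the Kac--Rice integrand $\det(-\grad^2 f)$, and writing this as the top-degree double form $\frac1{n!}(-\grad^2 f)^n$, the metatheorem produces
$$\E[\bm\chi(A_u)]=\int_M \frac1{n!}\,\E\!\left[(-\grad^2 f)^n\,1_{\{f\geq u\}}\,\middle|\,\grad f=0\right]p_{\grad f(t)}(0)\,\mathrm{Vol}_g(dt).$$
Everything thus reduces to the conditional double-form moment isolated in Part (A), followed by integration over $M$.

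For Part (A), I would condition first on $(y,\grad y)$ and split the Hessian as $\grad^2 f=\mu_{y,\grad y}+W$, where $\mu_{y,\grad y}$ is the conditional mean \eqref{eqn:final-mean} and $W$ is the centered Gaussian double form whose conditional second moment \eqref{eqn:final-covariance} is $\E[W^2\mid y,\grad y]=-\|D\grad F(y)\|^2 I^2-2\|D^{1/2}\grad F(y)\|^2 R$. Expanding the double-form power binomially, $(-\grad^2 f)^n=\sum_{k=0}^n\binom nk(-\mu)^{\,n-k}(-W)^k$, I would then apply the Gaussian (Wick) moment identity for double forms: conditionally the odd powers of $W$ vanish, and each even power $W^{2j}$ resolves into $(2j-1)!!$ pairings, every pairing contributing one copy of the covariance double form above. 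Collecting the $j$ factors of the curvature piece $-2\|D^{1/2}\grad F\|^2 R$ produces exactly the prefactor $(-R_t)^j/j!$, while the residual factors of $-\|D\grad F\|^2 I^2$ together with the surviving power $\mu^{\,n-2j}$ assemble into the random double form $\alpha_j(t)$; since the top double-form degree is $n$, the sum truncates at $j=\lfloor n/2\rfloor$. The conditioning in the statement is on $(y,\grad f)$ rather than $(y,\grad y)$, so I would organize the computation as a tower, $\E[\,\cdot\mid y,\grad f]=\E\big[\E[\,\cdot\mid y,\grad y]\,\big|\,y,\grad f\big]$: the inner expectation supplies the curvature series, while the outer expectation over $\grad y$ given $(y,\grad f)$ acts on the mean contribution. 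Expanding $\mu=y^*\grad^2F(y)-\langle D\grad F(y),y\rangle I$ and applying Lemma \ref{Trace} to the powers $(y^*\grad^2F)^l$ replaces them by the trace double form $\mathrm{Tr}^{\grad F^\perp}(D\grad^2F_{\vert\grad F^\perp})$ up to the controlled terms $Err_l$, which I would absorb into $\alpha_j(t)$.

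In Part (B) I would substitute the Part (A) series into the displayed integral. The curvature tensor $R_t$ is a deterministic function of the base metric $g$, so it pulls out of the conditional expectation, leaving
$$\E[\bm\chi(A_u)]=\sum_{j=0}^{\lfloor n/2\rfloor}\frac1{j!}\int_M (-R_t)^j\,\E\!\left[\alpha_j(t)\,1_{\{f\geq u\}}\,\middle|\,\grad f=0\right]p_{\grad f(t)}(0)\,\mathrm{Vol}_g(dt).$$
Here the decoupling becomes visible: integrating the curvature form $(-R_t)^j$ against the metric-valued part of $\alpha_j$ over $M$ is precisely the Chern--Gauss--Bonnet representation of a Lipschitz--Killing curvature, so the spatial integral collapses to a scalar multiple of $\bm\calL_{n-2j}(M)$ (and, on a manifold with boundary, the remaining $\bm\calL_m(M)$ through the analogous boundary curvature integrals). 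The residual distributional factor, built from $\langle D\grad F,y\rangle$, $\|D\grad F\|$, $\|D^{1/2}\grad F\|$ and the trace term of Lemma \ref{Trace}, is exactly the EC density $\rho_j(F,u)$. Re-indexing the sum then yields the stated form $\sum_{j=0}^n\bm\calL_j(M)\rho_j(F,u)$.

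I expect the combinatorial Wick expansion of the double-form power in Part (A) to be the main obstacle: keeping track of how the heterogeneous matrix $D$ weights the two covariance contributions across all $(2j-1)!!$ pairings, and ensuring that after the Lemma \ref{Trace} passage from $\grad y$ to $\grad f$ the error terms $Err_l$ do not pollute the leading curvature series, is the delicate accounting that drives the whole result. By comparison, Part (B) is conceptually routine once the identification of the curvature integrals with LKCs (standard Chern--Gauss--Bonnet theory) is granted.
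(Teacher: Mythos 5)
Your proposal is correct and takes essentially the same route as the paper's own proof: Part (A) by tower conditioning on $(y,\grad y)$, applying the Gaussian double-form moment (Wick) identity to the conditional mean and covariance computed in Lemma \ref{CMV}, extracting the powers of $R$, and invoking Lemma \ref{Trace} to convert the $(y^*\grad^2F)^l$ terms into the trace double forms; Part (B) by the Morse-theoretic signed count, the expectation metatheorem, the identity $\det(A)=\frac{1}{n!}\mathrm{Tr}(A^n)$, and the identification of the curvature integrals $\mathrm{Tr}^M(-R)^j$ with the LKCs while the remaining Gaussian expectations form the EC densities. The differences (phrasing the moment formula as binomial expansion plus $(2j-1)!!$ pairings rather than quoting it, keeping $p_{\grad f(t)}(0)$ explicit, and noting boundary contributions) are cosmetic rather than substantive.
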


\begin{proof}
\begin{enumerate}[label=(\Alph*).]
\item In the proof, the expectation is evaluated at $t$, but, we choose to suppress the notation $t$ for convenience. To prove the first part of the theorem, we use the tower property of expectation to condition on the field $y$ and gradient $\grad y$. This reduces computations to the conditional mean and variances of a Gaussian random field, that is
$$\E\left[-(\nabla ^{2}f)^{n}\vert y, \grad f \right]=\E\left[\E[(-\nabla ^{2}f)^{n}|y,\grad y]\lvert y, \grad f \right],$$
We simplify the inner expectation using a binomial expansion of
$$\left(-(\grad^2 f-\mu_{y,\grad y})-\mu_{y,\grad y}\right)^n,$$ and by plugging in the conditional mean and conditional variance using \eqref{eqn:final-mean} and \eqref{eqn:final-covariance}. This follows from the observation that the expectation of a Gaussian double form simplifies into a binomial expansion of its mean and variance, expressed in the below remark.
\begin{remark}
If $Z$ is a Gaussian double form, then
$$\E(Z^k)=\sum_{i=0}^{\lfloor \frac{k}2\rfloor}\cfrac{k!}{(k-2i)! i! 2^i}\mu^{k-2i}\Sigma^{i},$$
where $$\mu=\E(Z) \text{ and } \Sigma=\E(Z-\E(Z))^2.$$
The reader can check \cite{taylor2003euler} for a derivation of the above.
\end{remark}
This gives us
\begin{eqnarray}
\label{innerexp}
&& \frac1{n!}\E\left( \left.\left(-\grad^2 f+\mu_{y,\grad y}-\mu_{y,\grad y}\right)^n\right\vert y,\grad y\right)  \nonumber\\
&=& \frac1{n!} (-1)^n \sum_{i=0}^{\lfloor \frac{n}2\rfloor} \frac{n!}{(n-2i)!i! 2^i} 
\left[ y^*\grad^2F(y) -\langle D\grad F(y),y \rangle I_n\right]^{n-2i}\nonumber\\
&& \times \left[- \langle D\grad F(y),D\grad F(y) \rangle I^2 
-2\langle D^{1/2}\grad F(y),D^{1/2}\grad F(y) \rangle R \right]^{i} \nonumber\\
&=& (-1)^n \frac1{n!}\sum_{i=0}^{\lfloor \frac{n}2\rfloor} \frac{n!}{(n-2i)!i! 2^i} 
\left[ y^*\grad^2F(y) -\langle D\grad F(y),y \rangle I_n\right]^{n-2i}\nonumber\\
&& \times \lVert  D^{1/2}\grad F(y)\lVert^{2i}\left[- \dfrac{\lVert  D\grad F(y)\lVert^2}{\lVert  D^{1/2}\grad F(y)\lVert^2} I^2 
-2 R \right]^{i} \nonumber\\
&=&  \sum_{j=0}^{\lfloor \frac{n}2\rfloor}\sum_{i=j}^{\lfloor \frac{n}2\rfloor}(-1)^{n+i} \frac{1}{(n-2i)!i! 2^i} \frac{i!}{(i-j)!j!}
\left[\dfrac{y^*\grad^2F(y)}{ \lVert  D^{1/2}\grad F(y)\lVert} -\dfrac{\langle D\grad F(y),y \rangle}{ \lVert  D^{1/2}\grad F(y)\lVert} I_n\right]^{n-2i}\nonumber\\
&&  \lVert  D^{1/2}\grad F(y)\lVert^{n}\left(\dfrac{\lVert  D\grad F(y)\lVert^2}{\lVert  D^{1/2}\grad F(y)\lVert^2} I^2  \right)^{i-j} 
(2 R)^j \nonumber\\
&=&  \sum_{j=0}^{\lfloor \frac{n}2\rfloor}\dfrac{(-R)^j}{j!} \lVert  D^{1/2}\grad F(y)\lVert^{n}\sum_{i=j}^{\lfloor \frac{n}2\rfloor}(-1)^{i}\frac{1}{(n-2i)!i! 2^i} \frac{i!}{(i-j)!j!}\left(\dfrac{\lVert  D\grad F(y)\lVert^2}{\lVert  D^{1/2}\grad F(y)\lVert^2} I^2  \right)^{i-j} \nonumber\\
&& \times\sum_{l=0}^{(n-2i)}\frac{(n-2i)!}{l!(n-2i-l)!}\left(y^*(-\grad^2F(y)/ \lVert  D^{1/2}\grad F(y)\lVert)\right)^{l}\left(\dfrac{\langle D\grad F(y),y \rangle}{ \lVert  D^{1/2}\grad F(y)\lVert} I_n\right)^{n-2i-l} \nonumber\\
&=&  \sum_{j=0}^{\lfloor \frac{n}2\rfloor}\dfrac{(-R)^j}{j!}\lVert  D^{1/2}\grad F(y)\lVert^{n}\sum_{l=0}^{n-2j}\dfrac{\left(y^*(-\grad^2F(y)/ \lVert  D^{1/2}\grad F(y)\lVert)\right)^{l}}{l!}\nonumber\\
&& \;\;\;\;\;\times\dfrac{(-1)^{(n-2j-l)}}{(n-2j-l)!} \left(\dfrac{\lVert  D\grad F(y)\lVert}{\lVert  D^{1/2}\grad F(y) \lVert}\right)^{n-2j-l} H_{n-2j-l}\left(\dfrac{\langle D\grad F(y),y \rangle}{ \lVert  D\grad F(y)\lVert}\right)I_{n}^{n-2j-l}
\end{eqnarray}

\noindent The third equality is obtained by a binomial expansion of 
$$\left[- \dfrac{\lVert  D\grad F(y)\lVert^2}{\lVert  D^{1/2}\grad F(y)\lVert^2} I^2 
-2 R \right]^{i},\;\;\text{for each } i\in \left\{0,1,...,\lfloor \frac{n}2\rfloor\right\},$$ and an interchange of summations.
The subsequent equality follows by another binomial expansion of the expression 
$$\left[\dfrac{y^*\grad^2F(y)}{ \lVert  D^{1/2}\grad F(y)\lVert} -\dfrac{\langle D\grad F(y),y \rangle}{ \lVert  D^{1/2}\grad F(y)\lVert} I_n\right]^{n-2i},\;\text{for each } i\in\left\{j,j+1,...,\lfloor\frac{n}2\rfloor\right\}.$$ 
The final equality follows by another interchange of summations and clubbing of terms to get Hermite polynomials as a function of  $\dfrac{\langle D\grad F(y),y \rangle}{ \lVert  D\grad F(y)\lVert}$.\\

\noindent With inner expectation evaluated in \eqref{innerexp}, we apply Lemma \ref{Trace} to evaluate $$ \E[(y^*\grad^2F)^{l}\vert y,\grad f],$$
which finally yields 
\begin{eqnarray}
\label{final exp}
&& \frac1{n!}\E[(-\nabla ^{2}f)^{n}|y,\grad f] \nonumber\\
&&= \sum_{j=0}^{\lfloor \frac{n}2\rfloor}\dfrac{(-R)^j}{j!}I_{n}^{n-2j}\lVert  D^{1/2}\grad F(y)\lVert^{n}\times  \sum_{l=0}^{n-2j} \dfrac{(-1)^{(n-2j-l)}}{(n-2j-l)!} \left(\dfrac{\lVert  D\grad F(y)\lVert}{\lVert  D^{1/2}\grad F(y) \lVert}\right)^{n-2j-l} \nonumber \\
&& \;\;\;\;\;\times H_{n-2j-l}\left(\dfrac{\langle D\grad F(y),y \rangle}{ \lVert  D\grad F(y)\lVert}\right)\times \;Tr^{\grad F^{\perp}}(D\grad^2 F_{\vert \grad F^{\perp}}/\lVert  D^{1/2}\grad F(y)\lVert)^{l}.
\end{eqnarray}
We ignore the error term in \ref{Trace}, that contributes
$$ \sum_{j=0}^{\lfloor \frac{n}2\rfloor}\sum_{l=0}^{n-2j} R^{j}I^{n-2j-l}H_{n-2j-l}\left(\dfrac{\langle D\grad F(y),y \rangle}{ \lVert  D\grad F(y)\lVert}\right)Err_l,$$
upto constants.
\noindent This proves \eqref{series}, with the random forms
\begin{eqnarray}
&& \alpha_j(t)=I_{n}^{n-2j}\lVert  D^{1/2}\grad F(y)\lVert^{n}\times  \sum_{l=0}^{n-2j} \dfrac{(-1)^{(n-2j-l)}}{(n-2j-l)!} \left(\dfrac{\lVert  D\grad F(y)\lVert}{\lVert  D^{1/2}\grad F(y) \lVert}\right)^{n-2j-l} \nonumber\\
&& \;\;\;\;\;\;\;\;\;\; \times \;\;H_{n-2j-l}\left(\dfrac{\langle D\grad F(y),y \rangle}{ \lVert  D\grad F(y)\lVert}\right)\times \;Tr^{\grad F^{\perp}}(D\grad^2 F_{\vert \grad F^{\perp}}/\lVert  D^{1/2}\grad F(y)\lVert)^{l}\nonumber
\end{eqnarray}
\item The second part of the theorem follows from the expectation metatheorem for counting critical points of $f$ in $M$ above level $u$ and index $k$ and Morse's representation for Euler characteristic, that leads to computation of its expected value. We thus have, 
\begin{equation}
\label{EC}
\mathbb{E}\bm\chi(M\cap f^{-1}([u,\infty))=\int_{M}  \E\left[ \left.\mathbb{I} (f\geq u)\text{det}(-\nabla ^{2}f)\right\vert \grad f =0\right]Vol_{M,g}.
\end{equation}
\noindent Using the definition of trace so that for any double form $A$, we have 
$$\text{det}(A)=\frac{1}{n!}Tr(A^n),$$
\begin{eqnarray}\label{eqn:EC-excursion}
&& \mathbb{E}\bm\chi(M\cap f^{-1}([u,\infty)) \nonumber\\
&=&\frac{1}{n!}\int_{M} \E\left[\lim\limits_{\epsilon\to 0} \left.\mathbb{I} (f\geq u,\|\grad f\|<\epsilon)Tr^{M}((-\nabla ^{2}f)^{n})\right\vert \grad f\right]Vol_{g}\nonumber\\
&=& \frac{1}{n!}\int_{M} \E\left( \lim\limits_{\epsilon\to 0} \left.\mathbb{I} (f\geq u,\|\grad f\|<\epsilon)\E\left[Tr^{M}((-\nabla ^{2}f)^{n})\right\vert f, \grad f \right]\right)Vol_{g}\nonumber\\
&=& \frac{1}{n!}\int_{M} \E\left(  \lim\limits_{\epsilon\to 0} \left.\mathbb{I} (f\geq u,\|\grad f\|<\epsilon)Tr^{M}\E\left[\left\{\E(-\nabla ^{2}f)^{n}|y,\grad y\right\}\right\vert y, \grad f\right]\right)Vol_{g}.\nonumber
\end{eqnarray}
\noindent Using \eqref{series}, we can conclude that $\mathbb{E}\chi(M\cap f^{-1}([u,\infty)) $ equals
\begin{equation}
\frac{1}{n!}\int_{M}\sum_{j=0}^{\lfloor \frac{n}2\rfloor} (2\pi)^{j}\E\left(  \lim\limits_{\epsilon\to 0} \mathbb{I} (f\geq u,\|\grad f\|<\epsilon) \alpha_j\right)\dfrac{Tr^{M}(-R)^j}{(2\pi)^{j}j!}Vol_{g}, 
\end{equation}
where $Tr^{M}(-R)^j$, the geometry from the Riemannian structure induced by $g$ contributes to the LKC and the expectation, computed w.r.t the standard Gaussian density on $\real^K$ form the EC functionals $\rho$ .
\end{enumerate}
\end{proof}

\begin{remark}
\label{scale:exp}
This is an elucidation on Remark \ref{scale} in \ref{intro}. If we have a scaled spatial covariance matrix by a factor of $\nu$, that is the induced metric
$$g_{i,j}^k=\lambda_k \times \nu\cdot g_{i,j},$$
and we carry out computations w.r.t. the canonical spatial metric $g$, then the scaled versions of conditional mean and variance are
\begin{equation*}
\label{eqn:mean:scaled}
 \mu_{y,\grad y}=\E\left(\left.\grad^2 f \right\vert y,\grad y\right)
=y^*\grad^2F(y) -\nu\langle D\grad F(y),y \rangle I_n
\end{equation*}
\begin{align*}
\E\left(\left. \left(\grad^2f -  \mu_{y,\grad y}\right)^2\right\vert y,\grad y\right)&=
 - \nu^2\langle D\grad F(y),D\grad F(y) \rangle I^2 \\
&\;\;\;\;-2\nu\langle D^{1/2}\grad F(y),D^{1/2}\grad F(y) \rangle R.
\end{align*}
Plugging the scaled versions in the calculations, and noting that $Tr^M$ scales as $\nu^{-n}$ and $Vol_{g^k}$ scales as $\nu^{n/2}$, we obtain the expected Euler characteristic as
$$\sum_{j=0}^{n}\nu^{j/2}\bm{\mathcal{L}}_{j}(M){\rho}_{j}(F,u)=\sum_{j=0}^{n}\bm{\mathcal{L}}^{\nu}_{j}(M){\rho}_{j}(F,u),$$
where $\bm{\mathcal{L}}^{\nu}$ is the LKC computed w.r.t to the induced spatial metric $\nu \cdot g$. 
\end{remark}

\begin{remark} 
To see the GKF in \cite{Taylor06} as a special case of \ref{GGKF}, we let $D=I$ in our computations in \ref{GGKF}. The difference is that our LKCs are computed w.r.t. the base spatial metric $g$, whereas the ones in \cite{Taylor06} are w.r.t the induced spatial metric $\nu\cdot g$. Finally, Remark \ref{scale:exp} shows that our calculations match as we switch to the induced LKCs.
\end{remark}

\section{Integral Representation of EC densities}
\label{IEC}
\indent In this section, we complete the details of proof of \ref{KFF-gen}, through an integral representation for the EC densities of heterogeneous Gaussian related fields with the Gaussian building blocks, $y$ on $\real^n$, satisfying \ref{item:1}, \ref{item:2}, \ref{item:3} and \ref{item:4}, stated in \ref{KFF}. The integral form of EC densities is seen to match with the coefficients in the volume expansion of the ellipsoidal tubes, introduced in \ref{GP}. \\
\indent We begin with a lemma, which evaluates a conditional expectation of functions of the random field restricted to the first $n-1$ coordinates, used in calculation of EC density, denoted as ${\rho}_n(F,u)=\rho_{f,n}(u)$. \\
\indent We introduce few notations for the section. Denote the gradient of $f$ with respect to the first $(n-1)$ coordinates only as
$$\grad f_{\vert (n-1)}=\left(\cfrac{\partial f}{\partial t_1},\cfrac{\partial f}{\partial t_2},...,\cfrac{\partial f}{\partial t_{n-1}}\right),$$ 

\noindent the joint density of $(f,\grad f_{\vert (n-1)})$ at $(u,0)$ as
$\phi_{f,\grad f_{\vert (n-1)}}(u,0),$ 
and the Hessian of $f$ restricted to again the first $(n-1)$ coordinates as
$\grad^2 f_{\vert (n-1)}.$
\begin{Lemma}
\label{in_exp}
With the same set up as \ref{GGKF} with a heterogeneous Gaussian related field $f$, we have 
\begin{eqnarray}
\label{inner expectation}
&& \calN_n(F,y)= \E\left[\left(\frac{\partial f}{\partial t_n}\right)^{+}\det(-\grad^2 f_{\vert (n-1)}) \;\Bigg\vert y,\frac{\partial f}{\partial t_i},\;1\leq i\leq (n-1)\right] \nonumber\\
&=& \dfrac{\lVert  D^{1/2}\grad F(y)\lVert^{n}}{(2 \pi)^{1/2}}\times  \sum_{m=0}^{n-1}{(-1)^{(n-1-m)}}\dbinom{n-1}{m}\left(\dfrac{\lVert  D\grad F(y)\lVert}{\lVert  D^{1/2}\grad F(y) \lVert}\right)^{n-1-m} \nonumber\\
&& H_{n-1-m}\left(\dfrac{\langle D\grad F(y),y \rangle}{ \lVert  D\grad F(y)\lVert}\right)  \times Tr^{\grad F^{\perp}}(D\grad^2 F_{\vert \grad F^{\perp}}/\lVert  D^{1/2}\grad F(y)\lVert^{m})+ Tr(\text{Err}^{n-1}),\nonumber
\end{eqnarray}
where $$\text{Err}^{n-1}=\sum_{m=0}^{n-1}I^{n-1-m}H_{n-1-m}({\langle D\grad F(y),y \rangle})O(\|\grad f_{\vert (n-1)}\|^2\|\grad^2 F^l\|_{\otimes^{2l}\real^K}).$$
\end{Lemma}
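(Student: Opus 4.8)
The plan is to mirror the computation in Theorem \ref{GGKF}, exploiting the fact that the positive-part factor decouples from the determinant. First I would record that, conditional on $y$, the pushforward frame vectors $X_i = y_*(E_i) = \sum_{k}(\partial y_k/\partial t_i)\,\partial/\partial y_k$ are i.i.d. $N(0,D)$ on $T_y\real^K$, by marginal stationarity, separability \ref{item:2}, and metric conformity \ref{item:3}. In particular $\partial f/\partial t_n = \langle X_n,\grad F(y)\rangle \sim N(0,\|D^{1/2}\grad F(y)\|^2)$, while the first $(n-1)$ partials $\grad f_{\vert (n-1)}$ and the restricted Hessian $\grad^2 f_{\vert (n-1)}$ are functions of $(X_1,\dots,X_{n-1},\grad^2\underline{y})$ only. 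Since marginal stationarity gives $\grad y \perp \grad^2 y \mid y$, the variable $X_n$ is independent, given $y$, of both the conditioning field $\grad f_{\vert (n-1)}$ and of $\det(-\grad^2 f_{\vert (n-1)})$. Hence the expectation factors as
$$\calN_n(F,y) = \E\left[(\partial f/\partial t_n)^+ \mid y\right]\cdot \E\left[\det(-\grad^2 f_{\vert (n-1)}) \mid y, \grad f_{\vert (n-1)}\right],$$
and the first factor is the mean of a half-normal, $\E[(\partial f/\partial t_n)^+\mid y] = \|D^{1/2}\grad F(y)\|/\sqrt{2\pi}$, which supplies exactly the prefactor $\|D^{1/2}\grad F(y)\|^n/(2\pi)^{1/2}$ once combined with the second factor.

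For the second factor I would write $\det(-\grad^2 f_{\vert (n-1)}) = \frac{1}{(n-1)!}Tr((-\grad^2 f_{\vert (n-1)})^{n-1})$ and apply the tower property to condition first on $(y,\grad y)$. The inner expectation is a Gaussian double-form moment, computed exactly as in \eqref{innerexp} but in dimension $n-1$ and, crucially, on the flat parameter space $\real^n$, so the Riemannian curvature $R$ vanishes and the conditional variance from Lemma \ref{CMV} reduces to $-\|D\grad F(y)\|^2 I^2$. The binomial/Hermite bookkeeping of \eqref{innerexp} then collapses to the single $j=0$ term, yielding a sum over $m=0,\dots,n-1$ of terms proportional to $(-1)^{n-1-m}(\|D\grad F\|/\|D^{1/2}\grad F\|)^{n-1-m}H_{n-1-m}(\langle D\grad F, y\rangle/\|D\grad F\|)$ multiplying powers $(y^*\grad^2 F_{\vert (n-1)})^m$. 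Taking the remaining expectation over $\grad y$ given $(y,\grad f_{\vert (n-1)})$, I would invoke Lemma \ref{Trace} (with the $D^{-1}$ inner product introduced there) to replace each $(y^*\grad^2 F)^m$ by $Tr^{\grad F^\perp}(D\grad^2 F_{\vert \grad F^\perp})^m$ up to the error $Err_m$; collecting the normalizations $\|D^{1/2}\grad F\|$ then produces the stated $Tr^{\grad F^\perp}(D\grad^2 F_{\vert \grad F^\perp}/\|D^{1/2}\grad F\|^m)$ together with the binomial coefficients $\binom{n-1}{m}$ and the residual term $Tr(\text{Err}^{n-1})$.

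The routine parts are the half-normal computation and the inner Gaussian moment expansion, which are near-verbatim reruns of Theorem \ref{GGKF}. The main obstacle is the passage through the restriction to the first $(n-1)$ coordinates: I must verify that, after conditioning on $\grad f_{\vert (n-1)}$, the only surviving randomness in the pushforward frame is its component in $\grad F^\perp$, so that the projected trace $Tr^{\grad F^\perp}$, rather than a full trace on $T_y\real^K$, is what emerges when Lemma \ref{Trace} is applied; this is also where the $D^{-1}$ inner product must be matched carefully so that $\grad F^\perp$ is interpreted as orthogonality to $D\grad F$ as in Lemma \ref{Trace}. A secondary point requiring care is confirming that the curvature terms are genuinely absent here (consistent with $\rho_n$ being the EC density attached to the top Lipschitz--Killing curvature $\bm\calL_n$) and that the error propagates as $O(\|\grad f_{\vert (n-1)}\|^2\|(\grad^2 F)^m\|)$ uniformly through the binomial expansion.
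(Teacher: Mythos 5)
Your proposal is correct and follows essentially the same route as the paper: the key step in both is the conditional independence $\partial f/\partial t_n \perp \bigl(\{\partial f/\partial t_i\}_{i\leq n-1};\,\grad^2 f_{\vert (n-1)}\bigr)$ given $y$, which factors $\calN_n(F,y)$ into the half-normal mean $\lVert D^{1/2}\grad F(y)\lVert/(2\pi)^{1/2}$ times the conditional determinant expectation, the latter being exactly the quantity \eqref{final exp} from Theorem \ref{GGKF} specialized to dimension $n-1$ with vanishing curvature. The paper simply cites \eqref{final exp} where you re-run the double-form/Hermite expansion and Lemma \ref{Trace} explicitly, but the content is identical, including your (correct) observation that flatness of $\real^n$ kills the $R$-terms.
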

\begin{proof}
We can write
\begin{eqnarray}\label{eqn:ec}
&& \E\left[\left(\frac{\partial f}{\partial t_n}\right)^{+}det(-\grad^2 f_{\vert (n-1)}) \;\Bigg\vert y,\frac{\partial f}{\partial t_i},\;1\leq i\leq (n-1)\right]  \nonumber\\
&=& \E\left[\left(\frac{\partial f}{\partial t_n}\right)^{+}\;\Bigg\vert y,\frac{\partial f}{\partial t_i},\;1\leq i\leq (n-1)\right]\E\left[det(-\grad^2 f_{\vert (n-1)})\Bigg\vert y,\dfrac{\partial f}{\partial t_i},\;1\leq i\leq n-1\right] \nonumber \\
&=& \dfrac{\lVert  D^{1/2}\grad F(y)\lVert}{(2 \pi)^{1/2}} \times \lVert  D^{1/2}\grad F(y)\lVert^{n-1} \sum_{m=0}^{n-1}{(-1)^{(n-1-m)}}\dbinom{n-1}{m}\left(\dfrac{\lVert  D\grad F(y)\lVert}{\lVert  D^{1/2}\grad F(y) \lVert}\right)^{n-1-m} \nonumber\\
&&  \times H_{n-1-m}\left(\dfrac{\langle D\grad F(y),y \rangle}{ \lVert  D\grad F(y)\lVert}\right)\times Tr^{\grad F^{\perp}}(D\grad^2 F_{\vert \grad F^{\perp}}/\lVert  D^{1/2}\grad F(y)\lVert^{m})+ Tr(\text{Err}^{n-1}), \nonumber
\end{eqnarray}
where expectation $\E\left[det(-\grad^2 f_{\vert (n-1)})\Bigg\vert y,\dfrac{\partial f}{\partial t_i},\;1\leq i\leq n-1\right] $ is already evaluated in \eqref{final exp}.\\

\noindent The proof hinges on the observation that $${\partial f}/{\partial t_n} \perp (\left\{{\partial f}/{\partial t_i},\;1\leq i\leq n-1\right\}; \grad^2 f_{\vert (n-1)})\;\Big\lvert y.$$ 
\end{proof}

\indent We conclude the section with the derivation of the EC densities in an integral form, which matches with the coefficients in the Taylor expansion of ellipsoidal tubes.

\begin{thm}
\label{ECD}
Under the same set up as \ref{GGKF}, that is field $f=F\circ y$ with marginally stationary, zero-mean, unit variance, independent Gaussian fields with the gradient field having separable covariance structure and metric conformity, as considered in \ref{CMV}, the EC density functionals
$\{{\rho}_{n}(F,u), n\in \mathbb{Z}^+\}$, that appear in the series approximation of the expected Euler Characteristic in \eqref{series 1} can be expressed as
\begin{align*}
\label{EC density expression}
{\rho}_{n}(F,u) &= \dfrac{1}{(2\pi)^{n/2}}\sum_{m=0}^{n-1}(-1)^{n-1-m}\dbinom{n-1}{m}\int_{F^{-1}(z)}\frac{\|D^{1/2}\grad F(y)\|}{\|\grad F(y)\|}\left(\dfrac{\lVert  D\grad F(y)\lVert}{\lVert  D^{1/2}\grad F(y) \lVert}\right)^{n-1-m}\\ \nonumber
& \;\;\;\;\;\times H_{n-1-m}\left(\dfrac{\langle D\grad F(y),y \rangle}{ \lVert  D\grad F(y)\lVert}\right) Tr^{\grad F^{\perp}}(D\grad^2 F_{\vert \grad F^{\perp}}/\lVert  D^{1/2}\grad F(y)\lVert)^{m}  \\ \nonumber
&\;\;\;\;\; \times {(2\pi)^{-K/2}}e^{{-\lVert x\lVert^2}/2}d\calH_{K-1}(x). \numberthis
\end{align*}
\end{thm}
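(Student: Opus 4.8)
The plan is to compute $\rho_n(F,u)$ directly as a Kac--Rice (surface) density and then collapse the computation onto the already-established Lemma \ref{in_exp}. By the expectation metatheorem \eqref{eqn:meta} together with the Morse-theoretic/point-set representation underlying \ref{GGKF}, I would begin from the surface representation of the top EC density,
\[
\rho_n(F,u) = \E\left[\left(\frac{\partial f}{\partial t_n}\right)^{+} \det(-\grad^2 f_{\vert(n-1)}) \,\Big|\, \grad f_{\vert(n-1)}=0,\; f=u\right]\, p_{\grad f_{\vert(n-1)},f}(0,u),
\]
where $p_{\grad f_{\vert(n-1)},f}$ is the joint density of the reduced gradient and the field value. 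This is the same representation used in \cite{Taylor06}; admissibility of the $(\partial_n f)^{+}$/reduced-Hessian integrand rests on the regularity hypotheses \ref{itm:1}--\ref{itm:4} on $F$ and \ref{item:4} on $y$.

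First I would apply the tower property to condition on the field value $y$. Since $f=F\circ y$, conditionally on $y=v$ the value $f=F(v)$ is deterministic while $\grad f_{\vert(n-1)}$ stays Gaussian; by Assumption \ref{item:2} each $\partial_i f=\langle\grad F(v),\partial_{t_i}y\rangle$ has conditional variance $\grad F(v)^{\top}D\,\grad F(v)=\|D^{1/2}\grad F(v)\|^{2}$, and these are independent across $i$, so
\[
p_{\grad f_{\vert(n-1)}\mid y=v}(0)=(2\pi)^{-(n-1)/2}\,\|D^{1/2}\grad F(v)\|^{-(n-1)}.
\]
The inner conditional expectation is then exactly $\calN_n(F,v)$ of Lemma \ref{in_exp} evaluated at $\grad f_{\vert(n-1)}=0$; because the error term $\mathrm{Err}^{n-1}$ carries a factor $\|\grad f_{\vert(n-1)}\|^{2}$, it vanishes on the conditioning set, leaving only the Hermite/trace leading term with prefactor $\|D^{1/2}\grad F(v)\|^{n}/(2\pi)^{1/2}$.

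Next I would eliminate the constraint $f=u$, i.e.\ $F(v)=u$, by the co-area formula: integrating against $\delta(F(v)-u)$ converts the $\real^{K}$ integral into a surface integral over $F^{-1}(u)=\partial\calK$ against the standard Gaussian density $\vp(v)=(2\pi)^{-K/2}e^{-\|v\|^{2}/2}$ and produces the Jacobian factor $1/\|\grad F(v)\|$. Assembling the three scalar factors,
\[
\frac{\|D^{1/2}\grad F(v)\|^{n}}{(2\pi)^{1/2}}\cdot(2\pi)^{-(n-1)/2}\|D^{1/2}\grad F(v)\|^{-(n-1)}\cdot\frac{1}{\|\grad F(v)\|}=(2\pi)^{-n/2}\,\frac{\|D^{1/2}\grad F(v)\|}{\|\grad F(v)\|},
\]
recovers precisely the stated prefactor, while the ratio $\|D\grad F\|/\|D^{1/2}\grad F\|$, the Hermite polynomials $H_{n-1-m}(\langle D\grad F,y\rangle/\|D\grad F\|)$, and the trace $Tr^{\grad F^{\perp}}(D\grad^2 F_{\vert\grad F^{\perp}}/\|D^{1/2}\grad F\|)^{m}$ carry over verbatim from Lemma \ref{in_exp}. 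Collecting terms yields \eqref{EC density expression}.

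The step I expect to be the main obstacle is the rigorous justification of the surface representation together with the limits hidden in it: showing the metatheorem applies with the $(\partial_n f)^{+}$/reduced-determinant integrand, and that the interchange of expectation, conditioning, and the co-area integral is legitimate. This is where Assumption \ref{itm:4} (finiteness of the relevant conditional moments) and the regularity of $F$ are essential, guaranteeing both integrability of the Hermite--trace integrand against the Gaussian surface measure and the vanishing of the error contribution on $\{\grad f_{\vert(n-1)}=0\}$; the remaining algebra is the bookkeeping of $(2\pi)$-powers and norms displayed above.
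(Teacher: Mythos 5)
Your proposal is correct and follows essentially the same route as the paper's own proof: the identical Kac--Rice surface representation of $\rho_n(F,u)$, the same reduction to Lemma \ref{in_exp} (with the error term killed by its $\|\grad f_{\vert (n-1)}\|^2$ factor), the same Gaussian prefactor bookkeeping yielding $(2\pi)^{-n/2}\|D^{1/2}\grad F\|/\|\grad F\|$, and the same co-area step producing the surface integral over $F^{-1}(u)$. The only difference is presentational: where you manipulate conditional densities and a delta function directly, the paper realizes the conditioning through explicit $\epsilon$-window limits (indicators $\mathbb{I}(|f-u|<\epsilon)$, $\mathbb{I}(\|\grad f_{\vert (n-1)}\|<\epsilon)$ normalized by $2\epsilon^n\calV_{n-1}$, with the Gaussian-ball ratio giving $(2\pi)^{-(n-1)/2}$ and continuity of $\tilde{\calC}_F$ giving the final limit), which is precisely the rigor issue you correctly flag as the main obstacle.
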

\vspace{0.4cm}
\begin{proof}
The EC densities are calculated as
\begin{eqnarray}
\label{integral representation EC}
&&  {\rho}_n (F,u)=\rho_{f,n} (u)\nonumber\\
&=& \E\left[\left(\frac{\partial f}{\partial t_n}\right)^{+}det(-\grad^2 f_{\vert (n-1)})\Big\lvert f=u,\grad f_{\vert (n-1)}=0\right]\phi_{f,\grad f_{\vert (n-1)}}(u,0) \nonumber\\
&=& \lim_{\epsilon\to 0}\dfrac{1}{2\epsilon \calV_{n-1}(\epsilon)}\E\left[\mathbb{I}{(\vert f-u\vert <\epsilon)} \mathbb{I}{(\lVert \grad f_{\vert (n-1)}\lVert <\epsilon)}\left(\frac{\partial f}{\partial t_n}\right)^{+}det(-\grad^2 f_{\vert (n-1)})\right] \nonumber\\
&=& \lim_{\epsilon\to 0}\dfrac{1}{2\epsilon^n \calV_{n-1}}\E\left[\mathbb{I}{(\vert f-u\vert <\epsilon)} \mathbb{I}{(\lVert \grad f_{\vert (n-1)}\lVert <\epsilon)}\calN_n(F,y)\right]\nonumber
\end{eqnarray}
with $$\calN_n(F,y)=\E\left[\left(\frac{\partial f}{\partial t_n}\right)^{+}det(-\grad^2 f_{\vert (n-1)}) \;\Bigg\vert y,\frac{\partial f}{\partial t_i},\;1\leq i\leq (n-1)\right]$$ calculated in \ref{in_exp}, and
 $$\calV_{n}(\epsilon)=\text{Vol} (\calB(\epsilon)) \text{ and } \calV_n=\text{Vol} (\calB(0,1)) \text{ with } \calB(0,\epsilon)\subset \real^{n}.$$
 
\noindent Noting that $$ \lim_{\epsilon\to 0}\dfrac{1}{2\epsilon^n \calV_{n-1}}\E\left[\mathbb{I}{(\vert f-u\vert <\epsilon)} \mathbb{I}{(\lVert \grad f_{\vert (n-1)}\lVert <\epsilon)}Tr(\text{Err}^{n-1})\right]=0,$$
and by defining
\begin{align*}
\label{inner expectation expression}
\calC_F(y)&=\left(\dfrac{\lVert  D\grad F(y)\lVert}{\lVert  D^{1/2}\grad F(y) \lVert}\right)^{n-1-m}H_{n-1-m}\left(\dfrac{\langle D\grad F(y),y \rangle}{ \lVert  D\grad F(y)\lVert}\right)\nonumber \\
& \;\;\;\;\;\;\;\;\times Tr^{\grad F^{\perp}}(D\grad^2 F_{\vert \grad F^{\perp}}/\lVert  D^{1/2}\grad F(y)\lVert)^{m}, \nonumber
\end{align*}
we have
\begin{eqnarray}
\label{final expression EC}
&& {\rho}_{n}(F,u)\nonumber\\
&=& \lim_{\epsilon\to 0}\dfrac{1}{2\epsilon^n\calV_{n-1}}\times \sum_{m=0}^{n-1}(-1)^{n-1-m}\dbinom{n-1}{m}\nonumber\\
&& \;\;\;\;\; \times\E\left[\E\left[\mathbb{I}{(\vert f-u\vert <\epsilon)} \mathbb{I}{(\lVert \grad f_{\vert (n-1)}\lVert <\epsilon)}\dfrac{\lVert  D^{1/2}\grad F(y)\lVert^{n}}{(2 \pi)^{1/2}}\calC_{F}(y)\Bigg\vert y\right]\right]\nonumber\\
&=& \lim_{\epsilon\to 0}\dfrac{1}{2\epsilon}\times \sum_{m=0}^{n-1}(-1)^{n-1-m}\dbinom{n-1}{m}\nonumber\\
&& \;\;\;\;\; \times\E\left[\dfrac{\gamma_{\real^{n-1}}(\B(0,\epsilon/\lVert  D^{1/2}\grad F(y)\lVert))}{\calV_{n-1}(\epsilon/\lVert  D^{1/2}\grad F(y)\lVert)^{n-1}}\mathbb{I}{(\vert F(y)-u\vert <\epsilon)} \dfrac{\lVert  D^{1/2}\grad F(y)\lVert}{(2 \pi)^{1/2}}\calC_{F}(y)\right]\nonumber\\
&=& \lim_{\epsilon\to 0}\dfrac{1}{2\epsilon}\times \sum_{m=0}^{n-1}(-1)^{n-1-m}\dbinom{n-1}{m}\dfrac{1}{(2\pi)^{n/2}}\nonumber\\
&& \;\;\;\;\; \times\E\left[\mathbb{I}{(\vert F(y)-u\vert <\epsilon)} \lVert  D^{1/2}\grad F(y)\lVert\calC_{F}(y)\right] \nonumber\\
&=& \sum_{m=0}^{n-1}(-1)^{n-1-m}\dbinom{n-1}{m}\dfrac{1}{(2\pi)^{n/2}} \lim_{\epsilon\to 0}\dfrac{1}{2\epsilon}\E\left[\mathbb{I}{(\vert F(y)-u\vert <\epsilon)} \lVert  D^{1/2}\grad F(y)\lVert\calC_{F}(y)\right] \nonumber\\
&=& \sum_{m=0}^{n-1}(-1)^{n-1-m}\dbinom{n-1}{m}\dfrac{1}{(2\pi)^{n/2}}\nonumber \\
&&  \;\;\;\;\;\times\lim_{\epsilon\to 0}\dfrac{1}{2\epsilon}\int_{(u-\epsilon,u+\epsilon)}\int_{F^{-1}(z)} \frac{\|D^{1/2}\grad F(y)\|}{\|F(y)\|}\calC_{F}(y)\dfrac{1}{(2\pi)^{K/2}}e^{{-\lVert y\lVert^2}/2}d\calH_{K-1}(y)dz  \nonumber \\
&=& \dfrac{1}{(2\pi)^{n/2}}\sum_{m=0}^{n-1}(-1)^{n-1-m}\dbinom{n-1}{m} \tilde{\calC}_{F}(u),\nonumber
\end{eqnarray}
where
\begin{equation}
\label{func}
 \tilde{\calC}_{F}(z)=\int_{F^{-1}(z)}\calC_{F}(y)\dfrac{1}{(2\pi)^{K/2}}e^{{-\lVert x\lVert^2}/2}d\calH_{K-1}(y).
 \end{equation} 

The proof is complete by applying Federer's co-area formula in the penultimate equality, that is 
\begin{equation*}
\begin{aligned}
&& \int \lVert  D^{1/2}\grad F(y)\lVert\mathbb{I}{(\vert F(y)-u\vert <\epsilon)} \calC_{F}(y)\dfrac{1}{(2\pi)^{K/2}}e^{{-\lVert y\lVert^2}/2}dy \nonumber \\
&=& \int \cfrac{\lVert  D^{1/2}\grad F(y)\lVert}{\|\grad F(y)\|}\mathbb{I}{(\vert F(y)-u\vert <\epsilon)} \calC_{F}(y)\dfrac{1}{(2\pi)^{K/2}}e^{{-\lVert y\lVert^2}/2}\|\grad F(y)\|dy \nonumber \\
&=& \int_{(u-\epsilon,u+\epsilon)}\int_{F^{-1}(z)} \frac{\|D^{1/2}\grad F(y)\|}{\|F(y)\|}\calC_{F}(y)\dfrac{1}{(2\pi)^{K/2}}e^{{-\lVert y\lVert^2}/2}d\calH_{K-1}(y)dz.
\end{aligned}
\end{equation*}
\end{proof}

This final theorem helps conclude that the coefficients in the Gaussian volume expansion of an ellipsoidal tube, introduced in \ref{GP} do match with the EC densities appearing in the expansion of the expected Euler characteristic. Thus, we complete the details of the proof of \ref{KFF-gen}.

\section{An application} \label{application}
\indent Let $\left\{ T(x),\text{ }x\in S^{2}\right\} $
denote a Gaussian, zero-mean isotropic spherical random field then it is well-known from 
(cf.\cite{marpecbook}) that the following representation holds in the
mean square sense
\begin{equation*}
y(x)=\sum_{\mathbb{\ell }m}a_{\mathbb{\ell }m}\zeta_{\mathbb{\ell }m}(x) =\sum_{%
\mathbb{\ell }} y_{\mathbb{\ell }}(x)\text{ , }\,\,\,\,\,\,\,\,\,\,y_{\mathbb{\ell }}(x)=\sum_{m=-%
\mathbb{\ell }}^{\mathbb{\ell }}a_{\mathbb{\ell }m} \zeta_{\mathbb{\ell }m}(x)%
\text{ .}
\end{equation*}%
Here $\left\{ \zeta_{\mathbb{\ell }m}(.)\right\} $ denotes the family of
spherical harmonics, and $\left\{ a_{\mathbb{\ell }m}\right\} $ the array of
random spherical harmonic coefficients, which satisfy $$\mathbb{E}a_{\mathbb{%
\ell }m}\overline{a}_{\mathbb{\ell }^{\prime }m^{\prime }}=C_{\mathbb{\ell }%
}\delta _{\mathbb{\ell }}^{\mathbb{\ell }^{\prime }}\delta _{m}^{m^{\prime
}}.$$ Further, $\delta _{a}^{b}$ is the Kronecker delta function, and the
sequence $\left\{ C_{\mathbb{\ell }}\right\} $ represents the angular power
spectrum of the field. 

\indent The random field $y$ can be shown to be almost surely continuous if the $C_{\mathbb{\ell}}$
satisfies the assumption $\sum_{\mathbb{\ell }\geq L}(2\mathbb{\ell }+1)C_{\mathbb{\ell }%
}=O(\log ^{-2}L)$. It is worth noting here that the $y_{\mathbb{\ell}}$ also represent random 
eigenfunctions of the spherical Laplacian:%
\begin{equation*}
\Delta _{S^{2}}y_{\mathbb{\ell }}=-\mathbb{\ell }(\mathbb{\ell }+1)y_{%
\mathbb{\ell }}\text{ , }\mathbb{\ell }=1,2,...
\end{equation*}%

\indent More often, spherical eigenfunctions emerge naturally from the analysis of
the Fourier components of spherical random fields. In such
cases, several (nonlinear) functionals of $y_{\mathbb{\ell }}$ assume a great
practical importance: to mention a couple, the squared norm of $T_{\mathbb{%
\ell }}$ provides an unbiased sample estimate for the angular power spectrum 
$C_{\mathbb{\ell }},$ 
\begin{equation*}
\mathbb{E}\left\{ \int_{S^{2}}T_{\mathbb{\ell }}^{2}(x)dx\right\} =(2\mathbb{%
\ell }+1)C_{\mathbb{\ell }}\text{ ,}
\end{equation*}%
while higher-order power lead to estimates of the so-called polyspectra.

\indent In the framework of cosmological data analysis (or, CMB data analysis), 
a number of papers have searched for deviations of
geometric functionals from the expected behaviour under Gaussianity. Here,
the so-called Minkowski functionals have been widely used as tools to probe
non-Gaussianity of the field $y(x)$, see \cite{matsubara2010analytic} and the references
therein. Many other works have also focussed on local deviations from the
Gaussianity assumption, mainly exploiting the properties of integrated
higher order moments (polyspectra), see \cite{pietrobon1}, \cite{rudjord2}.

\indent The univariate Gaussian kinematic formula has already found many important applications to the analysis of cosmological data, see for instance \cite{CMB-GKF, mar-vad16} and the references therein for some discussion or \cite{planck2014planck} and \cite{ade2015planck} for applications to real data. The case of multivariate spherical fields will certainly become much more important in the years to come: to mention just a possible application, we recall that most of future CMB experiments will be focussed on so-called polarization data, which can be modeled as isotropic vector-valued Gaussian fields with three components, usually labelled T, E and B modes in the cosmological literature. B modes are reckoned to be independent from the T and E components, so they fall within the framework we developed in this paper.

\indent Consider two spherical, isotropic, Gaussian random fields $y_1$ and $y_2$ such 
that they are independent, but not identically distributed, i.e., the the two fields have
different angular power spectra $C_{1,\mathbb{\ell}}$ and $C_{2,\mathbb{\ell}}$,
and as before, let the second spectral moment of $y_1$ and $y_2$ be $\lambda_1$
and $\lambda_2$, respectively.

\indent Let us consider the nonlinear subordination given by the function $F(x_1,x_2) = x_1^2 + x_2^2$. 
In the context of Gaussian tube
formula, we consider tube around 
$$F^{-1}[u,\infty) = \{(x_1,x_2)\in\real^2: x_1^2 + x_2^2 \ge u\}=\calK.$$
Since we are considering $M=S^2$, therefore, we only are interested in $\bm\calM_{l}^{\gamma_{\real^2},D}(\calK)$
for $l=0$ and $l=2$. The case $l=0$ is simple as it's just the Gaussian volume of $\calK$, implying the
only nontrivial generalised GMF we are interested in is $\bm\calM_{2}^{\gamma_{\real^2},D}(\calK)$

\noindent Using the notation of previous sections,
\begin{eqnarray} \label{eqn:M2}
&&\bm\calM_{2}^{\gamma_{\real^K},D}(\calK)=\sum_{m=0}^{1}\int_{\partial\calK} (-1)^{1-m}\times \lVert \param(x)\lVert^{1-m} \\ \nonumber
&& \;\;\;\;\;\;\;\;\;\;\;\;\;\;\;\;\;\;\;\; \times H_{1-m}\left(\left\langle x,\dfrac{D\eta_{x}}{\|D\eta_x\|}\right\rangle\right)\vp(x)\calM_{m+1}^{*}(\calK,dx)\\ \nonumber
&& \;\;\;\;\;\;\;\;\;\;\;\;\;\;\;\;\;\;\;\; = (-1) \int_{\partial\calK}  \lVert \param(x)\lVert 
\times H_{1}\left(\left\langle x,\dfrac{D\eta_{x}}{\|D\eta_x\|}\right\rangle\right)\vp(x)\calM_{1}^{*}(\calK,dx) \\ \nonumber
&& \;\;\;\;\;\;\;\;\;\;\;\;\;\;\;\;\;\;\;\; + \int_{\partial\calK} \vp(x)\calM_{2}^{*}(\calK,dx)
\end{eqnarray}
where for $\calK$ defined above, $$\calM_{m+1}^{*}(\calK,dx) = m! \,\text{detr}_m(A(x))\,\calH(dx),$$
and $\param(x) = \cfrac{D\eta_x}{\|D^{1/2}\eta_x\|}$. 
Setting $(\tilde{E}_1, \tilde{E}_{2})$ as the orthonormal basis of $\real^2$ with respect to the weighted inner product,
with $\tilde{E}_2=\param(x)$, and $\tilde{E}_1 = \frac{E_1}{\|D^{-1/2}E_1\|}$, where $E_1$ is the vector orthogonal to 
$\eta = \frac{\grad F}{\|\grad F\|}$ in the usual metric, we observe that $(\tilde{E}_1,\tilde{E}_2)$ satisfy \eqref{eqn:ONB}.
With this notation
$$
\begin{aligned}
A(z)_{11} &= \frac{1}{\|\eta_z\|_{\Qinv}} 
\langle \nabla_{\tilde{E}_{\theta}}\eta, \tilde{E}_{\theta} \rangle_{I} \biggl|_z  \\
 &= \frac{1}{\|\grad F(z)\|_{\Qinv}} 
\langle \nabla_{\tilde{E}_{1}}\grad F(z), \tilde{E}_{1} \rangle_{I} \biggl|_z
\end{aligned}
$$
Using the standard calculus, and polar coordinates to parametric the set $K$, we obtain
$$A(u,\theta)_{11} 
= \frac1{u\,\left( \lambda^{-1}_1\cos^2\theta + \lambda^{-1}_2\sin^2\theta\right)\sqrt{\lambda_1\cos^2\theta + \lambda_2\sin^2\theta}}$$

Therefore, and the above integral in equation \eqref{eqn:M2} reduces to
\begin{eqnarray*}
\bm\calM_{2}^{\gamma_{\real^K},D}(\calK) &=& (-1)  \frac{e^{-u^2/2}}{2\pi}\int_{\theta\in(0,2\pi)}  
\left( \frac{\lambda^2_1\cos^2\theta + \lambda^2_2\sin^2\theta}{\lambda_1\cos^2\theta + \lambda_2\sin^2\theta}\right)^{1/2} \\ \nonumber
&&\;\;\;\;\;\;\;\;\;\;\;\;\;\;\;\;\;\;\;\;\times (-1)\frac{\lambda_1\cos^2\theta+\lambda_2\sin^2\theta}{\sqrt{\lambda_1^2\cos^2\theta+\lambda_2^2\sin^2\theta}} \,d\theta \\ \nonumber
&& + \frac{e^{-u^2/2}}{2\pi u} \int_{\theta\in (0,2\pi)} 
\frac1{\left( \lambda^{-1}_1\cos^2\theta + \lambda^{-1}_2\sin^2\theta\right)\sqrt{\lambda_1\cos^2\theta + \lambda_2\sin^2\theta}}\,d\theta 
\end{eqnarray*}

Next, note that $\bm\calM_{0}^{\gamma_{\real^K},D}(\calK) = \gamma_{\real^K}(\calK)$, and 
for the purpose of cosmological applications the parameter space is $S^2$, hence $\bm\calL_0(S^2) = 2$, $\bm\calL_1(S^2) = 0$ and 
$\bm\calL_2(S^2) = 4\pi$.

With all this information and equation \eqref{eqn:main},
we can write a precise expression for mean Euler-Poincar\'e characteristic of excursion sets
of a random field defined as the sum of squares of two independent, but non-identically distributed Gaussian random fields.

\section{Discussion}
We conclude the paper with few questions that can be addressed in future. 
\begin{itemize}
\item 
The expected Euler characteristic draws motivation from being a good approximation to excursion probabilities of the form $\mathbb{P}(\sup_{t\in M} f(t)\geq u)$ for a wide class of smooth random fields. The statistical implication of approximating such a probability is realized in achieving a control of Family Wise error Rate (FWER) in multiple testing of statistical hypotheses at each $t\in M$. While it is known from \cite{taylor2005validity} that the Euler characteristic heuristic holds explicitly for the Gaussian case with exponentially decaying errors, that is 
$$\left |\mathbb{P}(\sup_{t\in M} f(t)\geq u)-E(\bm{\chi}(M\cap f^{-1}[u,\infty))\right |\leq C\exp(-\alpha u^2),$$
for an explicitly computed constant $\alpha$. It remains to see if such an approximation with explicit rates on the error exists for the above fields in consideration.
\item 
\cite{taylor2009gaussian} provide an elegant geometric proof of the GKF through a Poincar\'e limit approximation of the canonical isotropic process on a unit sphere in $\real^l$. More precisely, a GKF is derived by passing on limits via a Poincar\'e limit theorem from a KFF for an approximating sequence of smooth $\real^K$ valued processes on a unit sphere in $\real^l$. The approximating processes are given by
$$y^{(n)}(t,g_n)\text{ identical in distribution to } \pi_{\sqrt{n},n,k}(\sqrt{n}g_n t),$$
where $g_n\in O(n)$- the orthogonal group in dimension $n$ and $\pi_{\sqrt{n},n,k}$ denotes the projection from sphere $S_{\sqrt{n}}(\real^n)$ to $\real^k$ for $t\in S(\real^l)$. A question in the case of heterogeneous Gaussian related random fields considered in this work would be if the GKF can be re-derived through a a sequence of limiting processes, thereby enabling a classical (albeit asymptotic) geometric interpretation as in \cite{taylor2009gaussian}.
\end{itemize}

\vbox{}

\noindent{\bf Acknowledgment.} Jonathan Taylor acknowledges the support of Air Force Office of Sponsored Research grant 113039. Sreekar Vadlamani acknowledges generous support of the AIRBUS Group Corporate Foundation Chair in Mathematics of Complex Systems. SV is also thankful to Domenico Marinucci for many fruitful discussions in the lead up to this work. 

\bibliographystyle{plainnat}
\bibliography{References} 
\Addresses
\end{document}